\theoremstyle{plain} 
\newtheorem{thm}{Theorem}[section]
\newtheorem{cor}[thm]{Corollary}
\newtheorem{prop}[thm]{Proposition}
\newtheorem{defn}[thm]{Definition}
\theoremstyle{remark}
\newtheorem{rem}[thm]{Remark}
\newcommand{\R}{\mathbb{R}}
\newcommand{\C}{\mathbb{C}}
\newcommand{\Q}{\mathbb{Q}}
\newcommand{\Fp}{\mathbb{F}_p}
\newcommand{\E}{\mathop{\mathbb{E}}}
\newcommand{\vast}{\bBigg@{4}}
\newcommand{\Vast}{\bBigg@{5}}
\newcommand{\bn}{\mathbf{n}}
\newcommand{\bm}{\mathbf{m}}
\newcommand{\bx}{\mathbf{x}}
\newcommand{\bh}{\mathbf{h}}
\newcommand{\bz}{\mathbf{z}}
\newcommand{\dprime}{\prime\prime}
\title[Nonlinear Szemer\'{e}di theorem for corners]{Uniform nonlinear Szemer\'{e}di theorem for corners in finite fields}
\author{Zi Li Lim}
\address{Department of Mathematics, UCLA, Los Angeles, CA 90095, USA}
\email{zililim@math.ucla.edu}
\begin{document}

\maketitle

\begin{abstract}
Let $P(t),Q(t)\in \Q(t)$ be rational functions such that $P(t),Q(t)$ and the constant function $1$ are linearly independent over $\Q$, we prove an asymptotic formula for the number of the corner configurations $(x_1,x_2),(x_1+P(y),x_2),(x_1,x_2+Q(y))$ in the subsets of $\Fp^2$.
\end{abstract}

\section{Introduction}\label{sec: intro}
\subsection{Introduction}

The monumental paper of Bergelson and Leibman \cite{bergelson-leibman} provides a satisfactory qualitative understanding of the sets that do not contain polynomial progressions, i.e., configurations of the form
\begin{equation*}
    x, x+P_1(y), x+P_2(y),...,x+P_l(y)
\end{equation*}
where $P_1,P_2,...,P_l$ are polynomials. Meanwhile, the quantitative understanding of this subject is still formative. Recent breakthroughs of Peluse \cite{peluse,peluse-polynomial} and of Peluse and Prendiville \cite{peluse-prendiville} bring major progress on quantitative estimates for sets lacking one dimensional polynomial progressions, but the theory for higher dimensional polynomial progressions is far from complete.

There are significant new difficulties for higher dimensional progressions, which could be found in Peluse's paper on L-shaped configurations \cite{peluse-L}. Further evidence regarding high dimensional difficulties are recorded in Austin's work \cite{austin}.

In this paper, we consider the following two dimensional configurations in finite fields $\Fp$
\begin{equation*}
    (x_1,x_2),(x_1+P(y),x_2),(x_1,x_2+Q(y))
\end{equation*}
where $P,Q$ are rational functions. We call these configurations \emph{corners generated by $P,Q$}. Here and throughout, we use the notation $\E$ for expectations, and it will exclude the poles when the summands of $\E$ involve rational functions. Besides, we will often abbreviate $\E_{x\in \Fp}$ as $\E_{x}$. Our main result is the following asymptotic formula for counting operator for corners generated by $P,Q$.

\begin{thm}\label{thm: main}
    Let $P(t),Q(t)\in \Q(t)$ be rational functions such that $P(t),Q(t)$ and the constant function $1$ are linearly independent over $\Q$, then we have the asymptotic formula
    \begin{multline*}
        \E_{x_1,x_2,y} f_0(x_1,x_2) f_1(x_1+P(y),x_2) f_2(x_1,x_2+Q(y))\\
        =\E_{x_1,x_2} \Bigl(f_0(x_1,x_2) \E_{a}f_1(a,x_2) \E_{b}f_2(x_1,b)\Bigr)+O_{P,Q}\bigl(p^{-\frac{1}{40960}}\bigr)
    \end{multline*}
    for all $1$-bounded functions $f_0,f_1,f_2:\Fp^2\longrightarrow \C$.
\end{thm}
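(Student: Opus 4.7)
The plan is to follow the Peluse--Prendiville paradigm for quantitative polynomial Szemer\'edi theorems, adapted to the two-dimensional corner setting. I would carry out the argument in four stages.

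First, I would apply a sequence of Cauchy--Schwarz steps (a PET-style induction) to the counting operator
\[
\Lambda(f_0,f_1,f_2) = \E_{x_1,x_2,y} f_0(x_1,x_2) f_1(x_1+P(y),x_2) f_2(x_1,x_2+Q(y)),
\]
eliminating $f_0$ first (CS in $x_1$), then $f_2$ (CS in $x_2$), and then iteratively differencing in $y$. The linear independence of $P,Q,1$ over $\Q$ is used at each step to guarantee that the residual rational-function configuration in $y$ remains non-degenerate. The output should be a bound of the form $|\Lambda|^{2^k} \ll \E_{x_2}\|f_1(\cdot,x_2)\|_{U^s(\Fp)}^{2^s}$ for an explicit $s$ depending on the degrees of $P,Q$, i.e.\ control of $\Lambda$ by a Gowers norm of $f_1$ applied fiberwise in the $x_1$-direction.

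Second, I would run Peluse-style degree lowering to reduce $s$ down to $2$. The crucial input is the rational Weil bound $|\E_y e_p(\xi P(y))|\ll p^{-1/2}$ for nonzero $\xi\in\Fp$ (with poles of $P$ excised, per the paper's convention for $\E$). Using dual functions together with another round of Cauchy--Schwarz, one would pass from $U^s$-control of $f_1(\cdot,x_2)$ on average in $x_2$ to $U^{s-1}$-control, iterating down to $U^2$-control. A symmetric argument applies to $f_2$ by swapping the roles of the two coordinate axes.

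Third, once I have $\E_{x_2}\|f_1(\cdot,x_2)-\E_a f_1(a,x_2)\|_{U^2}^{c}\ll\varepsilon$ in hand, a standard generalized von Neumann argument lets me replace $f_1$ by its $x_1$-average inside $\Lambda$ with an $O(\varepsilon^{c'})$ error; running the symmetric step for $f_2$ then gives the stated asymptotic formula. The final exponent $\tfrac{1}{40960}$ should arise from bookkeeping all the square-root losses through the Cauchy--Schwarz iterations and the degree-lowering cycle.

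The principal obstacle is the degree-lowering step in the rational (rather than polynomial) setting: poles must be managed throughout without introducing systematic bias, and the inductive invariant controlling configuration complexity at each Cauchy--Schwarz step must be chosen so that the hypothesis of $\Q$-linear independence of $P,Q,1$ propagates (losing it at any step would invalidate the Weil bound that powers the degree-lowering). A secondary subtlety is that the relevant Gowers norms live fiberwise along a single coordinate direction, so the $U^s$ inverse theorem must be applied in $x_1$ for each $x_2$, and the phase extracted from it must be coupled back to the two-variable corner configuration without losing quantitative control.
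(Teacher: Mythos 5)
Your plan diverges from the paper at the very first stage and runs into a fundamental obstruction there: iterative differencing in $y$ (classical PET induction) does not work for rational functions $P,Q$. For polynomials, each van der Corput step lowers the degree of the phase, eventually reducing to a linear phase; for rational functions, differencing does not reduce the complexity, and Weyl differencing is unavailable. The paper makes this point explicit in the introduction and replaces classical PET with an algebraic-geometric argument: after Cauchy--Schwarzing away $f_0,f_1,f_2$ in a carefully chosen order (and using an amplification trick of Kavrut--Wu, because the naive order of Cauchy--Schwarz yields a degenerate variety), the residual exponential sum becomes an $\Fp$-point count on a Roth variety $Y$ cut out by ten equations in sixteen variables; a Jacobian computation together with the Lang--Weil bound gives $|Y(\Fp)|\ll p^6$, and this directly yields control of $\Lambda$ by the directional norm $\lVert f_2\rVert_{U^2(0\times\Fp)}$ (dually, $\lVert f_1\rVert_{U^2(\Fp\times 0)}$), with no $U^s$ for $s>2$ appearing at any stage. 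In particular there is no $U^s\to U^{s-1}$ descent chain to run; the only degree lowering needed is the single step from $U^2$ to $U^1$, following Kuca's directional version.

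There is also a quantitative incompatibility built into your plan: the $s$ you would obtain from PET grows with $\deg P,\deg Q$, so the final exponent from bookkeeping the Cauchy--Schwarz and degree-lowering losses would decay with the degrees, whereas the theorem claims a power saving $p^{-1/40960}$ \emph{uniform} in the degrees. That uniformity is precisely the payoff of getting $U^2$-control directly from the variety count rather than through a degree-dependent cascade. Your stages 2--3 are close in spirit to Section 5 of the paper, but note that the relevant inverse theorem and degree lowering must be formulated for the directional box norms $U^2(\Fp\times 0)$ and $U^2(0\times\Fp)$ on $\Fp^2$ rather than for the fiberwise $U^2(\Fp)$; the base case of the induction (both $f_1$ and $f_2$ eigenfunctions) is handled by Bombieri's bound on one-variable rational exponential sums; and the concluding step passes to $U^1$ and invokes the two-term asymptotic for rational-function progressions rather than stopping at a generalized von Neumann argument at the $U^2$ level.
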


\begin{rem}
    We have not attempted to optimize the power-saving exponent $1/40960$. The precise numerical value of the power-saving exponent can be ignored for a first reading.
\end{rem}

\begin{rem}
    With more effort, it can be shown that the implied constant of the error term in Theorem \ref{thm: main} actually only depends on the degrees of $P,Q$, where we define the degree of a rational function to be the sum of the degrees of the numerator and denominator. Also, we refer to Section \ref{sec: convention} for more details regarding our asymptotic notation convention.
\end{rem}

As usual, one could derive an upper bound for subsets $A\subset \Fp^2$ that do not contain corners generated by $P,Q$. To see that, let $f_0,f_1,f_2$ be the characteristic function $1_{A}$, and note that
\begin{equation*}
    \E_{x_1,x_2} \Bigl(1_{A}(x_1,x_2) \E_{a}1_{A}(a,x_2) \E_{b}1_{A}(x_1,b)\Bigr)\gg \Bigl(\E_{x_1,x_2} 1_{A}(x_1,x_2)\Bigr)^3
\end{equation*}
by the pigeonhole principle (see \cite[Lemma 3.1]{han-lacey-yang} for details). This leads to the following power-saving bound.

\begin{cor}\label{cor: main}
    Let $P(t),Q(t)\in \Q(t)$ be rational functions such that $P(t),Q(t)$ and the constant function $1$ are linearly independent over $\Q$. Let $A$ be a subset of $\Fp^2$ with density $\delta=\frac{|A|}{p^2}\gg p^{-\frac{1}{122880}}$, then $A$ contains corners generated by $P,Q$. In fact, there are plenty of corners generated by $P,Q$ in $A$, that is
    \begin{equation*}
        \#\{(x_1,x_2,y)\in \Fp^3:\text{The corner}\; (x_1,x_2),(x_1+P(y),x_2),(x_1,x_2+Q(y))\; \text{is in}\; A\}\gg p^3\delta^3.
    \end{equation*}
\end{cor}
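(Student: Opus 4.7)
The plan is to deduce Corollary~\ref{cor: main} directly from Theorem~\ref{thm: main} applied to $f_0=f_1=f_2=1_A$. This specialization gives
\begin{equation*}
    \E_{x_1,x_2,y}\, 1_A(x_1,x_2)\, 1_A(x_1+P(y),x_2)\, 1_A(x_1,x_2+Q(y)) = M(A) + O_{P,Q}\bigl(p^{-1/40960}\bigr),
\end{equation*}
where I abbreviate $M(A) := \E_{x_1,x_2}\bigl(1_A(x_1,x_2)\,\E_a 1_A(a,x_2)\,\E_b 1_A(x_1,b)\bigr)$.

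Next I would bound $M(A)$ from below by $\delta^3$. Expanding the inner expectations, $M(A) = \E_{x_1,x_2,a,b}\, 1_A(x_1,x_2)\, 1_A(a,x_2)\, 1_A(x_1,b)$ is the normalized count of bipartite paths $P_4$ with vertices $b, x_1, x_2, a$ and edges weighted by $1_A$. The Sidorenko-type bound $M(A) \geq \delta^3$ then follows either from the Blakley--Roy inequality or from two iterated applications of Cauchy--Schwarz; this is exactly the pigeonhole lemma already cited in the excerpt as \cite[Lemma 3.1]{han-lacey-yang}.

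Combining the two ingredients gives
\begin{equation*}
    \E_{x_1,x_2,y}\, 1_A(x_1,x_2)\, 1_A(x_1+P(y),x_2)\, 1_A(x_1,x_2+Q(y)) \geq \delta^3 - O_{P,Q}\bigl(p^{-1/40960}\bigr).
\end{equation*}
Under the hypothesis $\delta \gg p^{-1/122880}$ one has $\delta^3 \gg p^{-1/40960}$, so the main term dominates the error by a fixed constant factor, and the left-hand side is $\gg \delta^3$. Multiplying by $p^3$, the number of triples $(x_1,x_2,y)\in \Fp^3$, then yields the claimed lower bound $\gg p^3\delta^3$ on the number of corners generated by $P,Q$ inside $A$.

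No serious obstacle arises here, as the corollary is a purely formal consequence of the main theorem; the exponent $1/122880 = 3\cdot (1/40960)$ is precisely the threshold at which the Sidorenko lower bound $\delta^3$ overcomes the power-saving error of Theorem~\ref{thm: main}. A minor bookkeeping point is that $\E_y$ excludes the $O_{P,Q}(1)$ poles of $P$ and $Q$, but this discrepancy affects the corner count by at most $O_{P,Q}(p^2)$, which is negligible compared to $p^3\delta^3$ throughout the stated density regime. The genuine difficulty of the paper is in establishing Theorem~\ref{thm: main} itself, not in this deduction.
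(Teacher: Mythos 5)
Your proposal is correct and matches the paper's own (brief) deduction of Corollary~\ref{cor: main}: apply Theorem~\ref{thm: main} with $f_0=f_1=f_2=1_A$, lower bound the main term by $\delta^3$ via the pigeonhole/Cauchy--Schwarz lemma cited in \cite[Lemma~3.1]{han-lacey-yang}, and observe that the hypothesis on $\delta$ makes this dominate the error. One small arithmetic slip worth fixing: you wrote $1/122880 = 3\cdot(1/40960)$, but the correct relation is $1/122880 = \tfrac{1}{3}\cdot(1/40960)$, which is exactly what makes $\delta^3\gg p^{-1/40960}$ when $\delta\gg p^{-1/122880}$.
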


As a remark, the linear independence assumption of $P,Q$ and constant function $1$ is not restrictive, it includes the cases that $P,Q$ are linearly independent polynomials with zero constant terms, which are the common assumptions for quantitative polynomial Szemer\'{e}di theorem. For example, the configurations we consider include
\begin{equation*}
    (x_1,x_2),(x_1+y,x_2),(x_1,x_2+y^2)
\end{equation*}
and
\begin{equation*}
    (x_1,x_2),(x_1+y^3,x_2),(x_1,x_2+y^3-y^2+y)
\end{equation*}
and even strange examples like
\begin{equation*}
    (x_1,x_2),\left(x_1+\frac{y^2}{y^7-5y^3},x_2\right),\left(x_1,x_2+y^{17}+\frac{1}{y^{13}+19}\right).
\end{equation*}

Compared to previous results, Theorem \ref{thm: main} and Corollary \ref{cor: main} is new in the following two aspects.
\begin{itemize}
\item Qualitatively, Theorem \ref{thm: main} and Corollary \ref{cor: main} allows $P,Q$ to be rational functions while the previous results are only applicable to polynomials.
    \item Quantitatively, the power-saving exponent $1/122880$ is an improvement upon previous results when $P,Q$ are polynomials and their degrees are the same and greater than $2$. Moreover, the power-saving exponent $1/122880$ is uniform in the degrees of $P,Q$; it does not decay exponentially as the degrees grow.    
\end{itemize}

To be more specific, previously Han, Lacey and Yang \cite{han-lacey-yang} proved that if $P,Q$ are linearly independent polynomials with zero constant terms and \textbf{distinct} degree or linearly independent quadratic polynomials with zero constant terms, and $A$ is a subset of $\Fp^2$ with density $\delta\gg p^{-1/16}$, then
\begin{equation*}
        \#\{(x_1,x_2,y)\in \Fp^3:\text{The corner}\; (x_1,x_2),(x_1+P(y),x_2),(x_1,x_2+Q(y))\; \text{is in}\; A\}\gg p^3\delta^3.
    \end{equation*}
On the other hand, Kuca \cite{kuca-1,kuca-2} proved that if $P,Q$ are linearly independent polynomials with zero constant terms, then there exists a constant $c$ depending on the degrees of $P,Q$ such that
\begin{equation*}
        \#\{(x_1,x_2,y)\in \Fp^3:\text{The corner}\; (x_1,x_2),(x_1+P(y),x_2),(x_1,x_2+Q(y))\; \text{is in}\; A\}\gg p^3\delta^3
    \end{equation*}
for all subsets $A\subset \Fp^2$ with density $\delta\gg p^{-c}$. The method of Han, Lacey and Yang is Fourier analytic while the method of Kuca is based on higher order Fourier analysis and degree lowering. The power-saving exponent $c$ in Kuca's results is unspecified in his papers, but it could be made explicit in principle, although it would be fairly small due to the astronomical amount of application of the Cauchy--Schwarz inequality in degree lowering. For this same reason, the power-saving exponent $c$ would also decay quite quickly as the degrees of $P,Q$ grow. Therefore, our power-saving exponent $1/122880$ is an uniform numerical improvement only in the cases not covered by the results of Han, Lacey and Yang.

The new difficulties when moving to rational function progressions from polynomial progressions are that Weyl differencing is no longer useful and techniques based on PET induction are no longer available for obtaining Gowers norm control. The main reason why we can allow rational functions in Theorem \ref{thm: main} is that we use algebraic geomerty to bypass Weyl differencing to obtain Gowers norm control for counting operators. As a side note, the Gowers norm control we obtain is of fairly low degree, and this explains the improvement on power-saving exponent in the error term.

The approach of this paper comes from four different papers \cite{han-lacey-yang,hong-lim,kavrut-wu,kuca-2}. To elaborate, Hong and the author \cite{hong-lim} developed an algebraic geometry version of PET induction that could bypass Weyl differencing to obtain Gowers norm control for one dimensional three term rational function progressions. To generalize the algebraic geometry PET induction to the case of corners, we incorporate the method with the Fourier analytic approach due to Han, Lacey and Yang \cite{han-lacey-yang}. However, the ``naive" generalization of algebraic geometry PET induction does not work, since it would discard too much cancellation among exponential sums, and lead to degeneracy of certain varieties. Luckily, we could borrow an idea from the work of Kavrut and Wu \cite{kavrut-wu} on improving the nonlinear Roth theorem to amplify the cancellation among exponential sums and hence obtain Gowers norm control. After obtaining Gowers norm control, it is natural to apply the degree lowering pioneered by Peluse \cite{peluse} to complete the proof. In fact, we need a different version of degree lowering that works for directional Gowers norms, and this version of degree lowering was developed by Kuca \cite{kuca-2}.

To draw parallel comparisons, we remark that corners could be viewed as a two dimensional generalization of one dimensional three term progressions. For results on one dimensional three term progressions, see work of Bourgain and Chang \cite{bourgain-chang}, work of Peluse \cite{peluse-three-term,peluse}, work of Dong, Li and Sawin \cite{dong-li-sawin}, and work of Hong and the author \cite{hong-lim}. In another direction, for polynomial Szemer\'{e}di theorem for corners in integers, see work of Peluse, Prendiville and Shao \cite{peluse-prendiville-shao} and work of Kravitz, Kuca and Leng \cite{kravitz-kuca-leng}.

\subsection{Acknowledgments}

The author would like to thank his advisor Terence Tao for helpful guidance and support. The author also thanks James Leng for encouragement.

\section{Convention}\label{sec: convention}

\subsection{Notation}
Throughout, let $p$ be a prime and $\mathbb{F}_p$ be the finite field with $p$ elements. For any finite set $\mathcal{A}$ and function $f:\mathcal{A}\longrightarrow\mathbb{C}$, let $\mathbb{E}_{x\in\mathcal{A}}f(x)=\frac{1}{|\mathcal{A}|}\sum_{x\in \mathcal{A}}f(x)$ be the average of $f$ over $\mathcal{A}$. When the summation variables are in $\mathbb{F}_p$, we would often omit the subscript $\mathbb{F}_p$ in the summation notation and write $\mathbb{E}_{x}$ instead. By abuse of notation, we also use $\mathbb{E}$ to denote averaging excluding the poles of the (rational) functions being averaged. For example, if $P(t),Q(t)\in \mathbb{Q}(t)$ are rational functions and $f_0,f_1,f_2:\mathbb{F}_p^2\longrightarrow\mathbb{C}$ are functions, then
\begin{multline*}
\E_{x_1,x_2,y}f_0(x_1,x_2)f_1(x_1+P(y),x_2)f_2(x_1,x_2+Q(y))=\\\frac{1}{p^3}\sum_{x_1,x_2,y\in \mathbb{F}_p \;\text{and}\; y\neq \text{poles}}f_0(x_1,x_2)f_1(x_1+P(y),x_2)f_2(x_1,x_2+Q(y)).
\end{multline*}

Furthermore, we would often omit the subscript $P,Q$ in Vinogradov's notation $\ll,\gg$ and Landau's notation $O,o$ since the implied constants would usually depend on the rational functions $P,Q$.

Moreover, we would use boldface letters to denote vectors in $\Fp^2$, for example $\bh,\bn,\bm$ would mean $(h_1,h_2),(n_1,n_2),(m_1,m_2)$ respectively.

\subsection{Fourier Analysis Notation}
Let $d$ be a positive integer, for any function $f:\mathbb{F}_p^d\longrightarrow \mathbb{C}$, denote the $L^r$-norm with respect to the uniform probability measure on $\mathbb{F}_p^d$ by $\lVert f \rVert_r=\bigl(\frac{1}{p^d}\sum_{x}\lvert f(x)\rvert^r\bigr)^{1/r}$ and the $l^r$-norm with respect to the counting measure on $\mathbb{F}_p^d$ by $\lVert f \rVert_{l^r}=\bigl(\sum_{x}\lvert f(x)\rvert^r\bigr)^{1/r}$. Denote $e(x)=e^{2\pi i x}$, $e_p(x)=e^{2\pi i x/p}$ and Fourier transform $\widehat{f}(\xi)=\mathbb{E}_{x} f(x)e_p(-x\xi)$. In this convention, Parseval's identity reads $\lVert f \rVert_2=\lVert \widehat{f} \rVert_{l^2}$ and Fourier inversion reads $f(x)=\sum_{\xi}\widehat{f}(\xi)e_p(\xi x)$.

\subsection{Gowers Norm Notation}

Let $G$ be a finite abelian group. Denote the difference operator by $\Delta$, that is $\Delta_hg(x)=g(x)\overline{g}(x+h)$ for any functions $g$ on $G$ and any elements $h\in G$. Let $s$ be a positive integer and $f:G\longrightarrow\C$ be a function, the $U^s(G)$-Gowers norm is defined to be
\begin{equation*}
    \lVert f \rVert_{U^s(G)}^{2^s}=\E_{x\in G}\;\E_{h_1,h_2,...,h_s\in G} \Delta_{h_1}\Delta_{h_2}...\Delta_{h_s}f(x).
\end{equation*}
More generally, let $H_1,H_2,...,H_s$ be subgroups of $G$, the Gowers box norm with respect to $H_1,H_2,...,H_s$ is defined to be
\begin{equation*}
    \lVert f \rVert_{H_1,H_2,...,H_s}^{2^s}=\E_{x\in G}\;\E_{h_1\in H_1,h_2\in H_2,...,h_s\in H_s} \Delta_{h_1}\Delta_{h_2}...\Delta_{h_s}f(x).
\end{equation*}
When $H=H_1=H_2=...=H_s$, we would write $\lVert f \rVert_{U^s(H)}$ for $\lVert f \rVert_{H_1,H_2,...,H_s}$.

Suppose $K_1,K_2,...,K_s$ are subgroups of $H_1,H_2,...,H_s$ respectively, then we have
\begin{equation*}
    \lVert f \rVert_{H_1,H_2,...,H_s}\leq\lVert f \rVert_{K_1,K_2,...,K_s}
\end{equation*}
for all functions $f$.

\subsection{Algebraic Geometry Terminology} By (classical) varieties, we mean any classical affine, quasi-affine, projective or quasi-projective varieties. We do not require the varieties to be irreducible, hence our affine varieties and projective varieties may be algebraic sets in some references. By abuse of terminology, we also refer to schemes that are separated and of finite type over $\mathrm{Spec}\;k$ for some field $k$ as a variety. We would often use $Y(\mathbb{C}),Z(\mathbb{F}_p),...$ to denote classical varieties and $Y,Z,...$ to denote schemes. If $X$ is a scheme over some field $k$, we also write $X(k)$ as $k$-valued points $\mathrm{Hom}_{k}(\mathrm{Spec}\;k,X)$. 

If $X$ is a variety or scheme, denote the dimension of $X$ as a variety (or scheme) by $\dim_{\mathrm{Var}}X$ (or $\dim_{\mathrm{Sch}}X$), which is the supremum of the length of the chains of irreducible closed subsets in the underlying topological space.

\section{Algebraic geometry PET induction}\label{sec: PET}

In this section, we would establish the framework of algebraic geometry PET induction. The main purpose is to upper bound the counting operator for corners generated by rational functions $P(t),Q(t)$ by a Gowers norm, modulo the point-counting estimates for certain varieties. The precise statement is in Theorem \ref{thm: PET}, and the corresponding point-counting estimates would be handled in Section \ref{sec: dim}.

\begin{defn}
    Let $P(t),Q(t)\in\Q(t)$ be rational functions over $\Q$. Define the \textbf{Roth variety} for corners associated to $P(t),Q(t)$ to be the (quasi-affine) variety cut out by the following ten equations in sixteen variables $y_1,y_2,...,y_{16}$.
    \begin{align}
        P(y_1)-P(y_3)&-P(y_9)+P(y_{11})=0\label{eqs: roth1}\\
        P(y_2)-P(y_4)&-P(y_{10})+P(y_{12})=0\label{eqs: roth2}\\
        P(y_5)-P(y_7)&-P(y_{13})+P(y_{15})=0\label{eqs: roth3}\\
        P(y_6)-P(y_8)&-P(y_{14})+P(y_{16})=0\label{eqs: roth4}\\
        Q(y_1)-Q(y_2)&-Q(y_{9})+Q(y_{10})=0\label{eqs: roth5}\\
        Q(y_3)-Q(y_7)&-Q(y_{11})+Q(y_{15})=0\label{eqs: roth6}\\
        Q(y_4)-Q(y_8)&-Q(y_{12})+Q(y_{16})=0\label{eqs: roth7}\\
        Q(y_5)-Q(y_6)&-Q(y_{13})+Q(y_{14})=0\label{eqs: roth8}\\
        P(y_{9})-P(y_{10})-P(y_{11})+P(y_{12})&-P(y_{13})+P(y_{14})+P(y_{15})-P(y_{16})=0\label{eqs: roth9}\\
        Q(y_{9})-Q(y_{10})-Q(y_{11})+Q(y_{12})&-Q(y_{13})+Q(y_{14})+Q(y_{15})-Q(y_{16})=0\label{eqs: roth10}
    \end{align}
    The Roth variety could be regarded as a classical variety or a scheme over various fields, for example, the complex numbers $\C$ or finite fields $\Fp$.
\end{defn}

\begin{thm}\label{thm: PET}
    Let $P(t),Q(t)\in \Q(t)$ be rational functions over $\Q$, and $Y(\Fp)$ be the $\Fp$-points of the Roth variety for corners associated to $P(t),Q(t)$. Then, we have the following Gowers norm control
    \begin{multline*}
        \biggl\lvert\E_{x_1,x_2,y} f_0(x_1,x_2) f_1(x_1+P(y),x_2) f_2(x_1,x_2+Q(y))\biggr\rvert\\
        \leq \lVert f_0 \rVert_2 \lVert f_1 \rVert_4 \lVert f_2 \rVert_4^{\frac{1}{2}} \lVert f_2 \rVert_{U^2(0\times \Fp)}^{\frac{1}{4}} \biggl(\frac{1}{p^6}|Y(\Fp)|\biggr)^{\frac{1}{16}}
    \end{multline*}
    for all $1$-bounded functions $f_0,f_1,f_2:\Fp^2\longrightarrow \C$.
\end{thm}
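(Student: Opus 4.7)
The plan is to obtain the stated Gowers-norm bound via four successive applications of the Cauchy--Schwarz inequality, each of which doubles the number of $y$-variables present. After the four applications the left-hand side has become $|\Lambda|^{16}$ and we have successively peeled off the factors $\lVert f_0\rVert_2^{16}$, $\lVert f_1\rVert_4^{16}$, $\lVert f_2\rVert_4^{8}$ and $\lVert f_2\rVert_{U^2(0\times\Fp)}^{4}$ on the right; the residual average over $(y_1,\dots,y_{16})\in\Fp^{16}$ is supported on the Roth variety cut out by \eqref{eqs: roth1}--\eqref{eqs: roth10}, and a bookkeeping of the box-norm normalisations yields the stated factor $(|Y(\Fp)|/p^{6})^{1/16}$.

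More concretely, the first Cauchy--Schwarz is taken in $(x_1,x_2)$ to peel off $f_0$ and produces a pair of $y$-variables $y_1,y_2$; after a suitable translation the $f_1$-product depends on $x_1,x_2$ and on $P(y_2)-P(y_1)$ only, while the $f_2$-product becomes a single multiplicative difference $f_2(x_1,x_2)\overline{f_2}(x_1,x_2+Q(y_2)-Q(y_1))$---a first-order Gowers difference of $f_2$ in the $0\times\Fp$ direction, which is the key structural feature that will eventually deliver the $U^2(0\times\Fp)$ norm. The second Cauchy--Schwarz, applied in the $x_1$-variable, decouples the $f_1$-product from the $f_2$-product and absorbs the $f_1$'s into $\lVert f_1\rVert_4^{4}$, introducing a second pair of $y$-variables and producing the $P$-parallelogram equations \eqref{eqs: roth1}--\eqref{eqs: roth4} as the relations forced among the shifts. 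The third Cauchy--Schwarz acts on the remaining $f_2$-product and extracts a factor of $\lVert f_2\rVert_4^{2}$, doubling the $y$-count to eight and generating the $Q$-parallelogram equations \eqref{eqs: roth5}--\eqref{eqs: roth8}. The fourth and final Cauchy--Schwarz, applied in the $0\times\Fp$ direction, introduces a second Gowers difference on $f_2$, so that the $f_2$-contribution becomes the box norm $\lVert f_2\rVert_{U^2(0\times\Fp)}^{4}$; this doubles the $y$-count to sixteen and produces the two $U^3$-type identities \eqref{eqs: roth9},\eqref{eqs: roth10} as the box identities forced by the combined shifts on the last eight variables.

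The main obstacle will be arranging the groupings of variables in the four Cauchy--Schwarz steps so that we obtain precisely the mixed $L^4\cdot U^2(0\times\Fp)$ control on $f_2$ (rather than the stronger-looking but less useful $U^2(\Fp^2)$ control, which would give weaker estimates under Kuca's directional degree-lowering later in the paper) and so that the forced identities among the $P(y_i)$ and $Q(y_i)$ match exactly the asymmetric pattern recorded in \eqref{eqs: roth1}--\eqref{eqs: roth10}, in particular the $Q$-equations pairing different coordinate directions depending on the value of a ``middle'' index. Here the Kavrut--Wu amplification trick mentioned in the introduction is essential: a fully symmetric Cauchy--Schwarz scheme would produce degenerate constraints such as $P(y_i)=P(y_j)$ identically, collapsing the Roth variety onto low-dimensional components and destroying the point-counting bound $|Y(\Fp)|\ll p^{6}$ that is to be proved in Section \ref{sec: dim}. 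Once the correct scheme is fixed, the residual estimates---of the form $\E_{x_1,x_2}|f_1(x_1,x_2)|^{2}|f_1(x_1+h,x_2)|^{2}\le\lVert f_1\rVert_4^{4}$---follow by standard changes of variables together with $1$-boundedness.
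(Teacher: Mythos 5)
Your high-level count is right---the paper does use four Cauchy--Schwarz applications and the $y$-variable count does double to $16$, with one of the factors $\lVert f_2\rVert_4$ and $\lVert f_2\rVert_{U^2(0\times\Fp)}$ coming from a deliberately asymmetric split in the spirit of Kavrut--Wu---but the argument you describe is not the paper's and, more importantly, as written it does not actually produce the point-count factor $\bigl(|Y(\Fp)|/p^6\bigr)^{1/16}$. After the first Cauchy--Schwarz in $(x_1,x_2)$, the paper does \emph{not} continue in physical space. It Fourier-expands $f_1,f_2$, extracts the one-variable exponential-sum kernel $K(a,b)=\E_y e_p\bigl(aP(y)+bQ(y)\bigr)$, packages the remaining Fourier data of $f_1,f_2$ into the auxiliary quantities $F_1(n_1,\bh)=\sum_{n_2}\Delta_{-\bh}\widehat f_1(n_1,n_2)$ and $F_2(m_2,\bh)=\sum_{m_1}\Delta_{\bh}\widehat f_2(m_1,m_2)$, and then performs the second, third and fourth Cauchy--Schwarz steps in the Fourier-dual variables $(n_1,\bh)$, $(m_2,\bh)$ and $(m_2',m_2'',\bh)$ respectively. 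The Roth-variety count appears only at the very last step: after all four Cauchy--Schwarz applications the residual expression is a product of eight copies of $\Delta K$ summed over ten free dual variables $(h_1,h_2,n_1,n_1',N_1,N_1',m_2,m_2',m_2'',M_2)$; expanding each $K$ as an exponential sum in one $y$-variable and invoking orthogonality of characters in the ten dual variables collapses the expression to $\frac{1}{p^6}|Y(\Fp)|$, with all ten defining equations \eqref{eqs: roth1}--\eqref{eqs: roth10} arising \emph{simultaneously} at this stage. Separately, the $F_1,F_2$-norms are converted to $\lVert f_1\rVert_4,\lVert f_2\rVert_4,\lVert f_2\rVert_{U^2(0\times\Fp)}$ by Propositions \ref{prop: Fourier1} and \ref{prop: Fourier2}, which again go through Parseval and hence through Fourier.

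Your physical-space plan does not have a mechanism that could replace this. If you only ever Cauchy--Schwarz in physical variables and bound residual $f$-products by $L^p$-norms or by $1$-boundedness, you never produce an indicator of an algebraic variety; you produce $1$'s or $\lVert f\rVert_p$'s. The passage to exponential sums and orthogonality of characters is what turns a ``trivially bounded by $1$'' quantity into the much smaller $|Y(\Fp)|/p^6$ when the defining equations are nondegenerate, and this step is essential to the theorem. Relatedly, your per-step bookkeeping of the Roth equations cannot be correct: equations \eqref{eqs: roth1}--\eqref{eqs: roth4} each involve two of $y_9,\dots,y_{16}$, so they cannot be ``produced'' at a stage where only $y_1,\dots,y_4$ exist. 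Finally, the degeneracy the Kavrut--Wu trick is used to avoid is not of the form ``$P(y_i)=P(y_j)$ identically''; the paper records that applying Cauchy--Schwarz to $\Lambda_1$ in $(\bh,m_2,m_2')$ at once (i.e.\ peeling $F_2\overline{F_2}$ in one step) leads to a variety cut out by six equations in eight variables whose dimension is three rather than the expected two, and the fix is to split this into two separate Cauchy--Schwarz steps---one in $(\bh,m_2)$ and then one in $(\bh,m_2',m_2'')$---so as to keep additional cancellation inside the kernel product. You would need to restore the Fourier expansion, the kernels $F_1,F_2$ and $K$, and the orthogonality step before the outline becomes a proof.
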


Heuristically, the Roth variety for corners is cut out by $10$ constraints in $16$ variables, hence the intuitive expectation suggests that the dimension of the Roth variety is $16-10=6$ and the size of $\Fp$-points on the Roth variety is about $p^{6}$ when the $10$ constraints are ``sufficiently independent". In this case, Theorem \ref{thm: PET} would indeed be providing control by the Gowers norm.

The ingredients of the proof of Theorem \ref{thm: PET} are divided into three propositions, namely Propositions \ref{prop: PET}, \ref{prop: Fourier1} and \ref{prop: Fourier2}. The strategy is to generalize the algebraic geometry PET induction for one dimensional three term rational function progressions developed by Hong and the author \cite{hong-lim} to two dimensional corners. We would proceed with Fourier analytic method, and transform the exponential sums into point-counting estimates for the corresponding Roth variety to avoid difficult multidimensional exponential sum estimates. The Fourier calculations in the first half (up until Equation \eqref{eqs: PET7}) of the proof of Proposition \ref{prop: PET} are same as the Fourier calculations in the work of Han, Lacey and Yang \cite{han-lacey-yang} on corners generated by polynomials. It seems plausible that combining the ideas from \cite{hong-lim} and \cite{han-lacey-yang} would lead to the desired estimates, however, there is a new obstacle--the ``naive" generalization of algebraic geometry PET induction produces varieties with wrong expected dimensions. Fortunately, this obstacle could be tackled by ideas in the work of Kavrut and Wu \cite{kavrut-wu} on improving the nonlinear Roth theorem.

The Proposition \ref{prop: PET} is an intermediate step for algebraic geometry PET induction; it has the correct geometric term $|Y(\Fp)|$ but messy Fourier terms $F_1,F_2$. The Fourier terms would be simplified in Proposition \ref{prop: Fourier1} and \ref{prop: Fourier2}. In the statement and proof of Proposition \ref{prop: PET}, we would frequently use boldface letters to denote vectors in $\Fp^2$, for example $\bh,\bn,\bm$ would mean $(h_1,h_2),(n_1,n_2),(m_1,m_2)$ respectively. Also, recall that the difference operator $\Delta$ is defined as $\Delta_{\bh} g(\bx)=g(\bx)\overline{g}(\bx+\bh)$ for any functions $g$. 

\begin{prop}\label{prop: PET}
    Let $P(t),Q(t)\in \Q(t)$ be rational functions over $\Q$, and $Y(\Fp)$ be the $\Fp$-points of the Roth variety for corners associated to $P(t),Q(t)$. For any functions $f_1,f_2$, define
    \begin{align*}
        F_1(n_1,\bh)&\coloneqq\sum_{n_2}\Delta_{-\bh}\widehat{f}_1(n_1,n_2)\\
        F_2(m_2,\bh)&\coloneqq\sum_{m_1}\Delta_{\bh}\widehat{f}_2(m_1,m_2),
    \end{align*}
    where the order of operators is taking Fourier transform first and then difference operator for $\Delta_{\bh}\widehat{f}$. Then, we have
    \begin{multline*}
        \biggl\lvert\E_{x_1,x_2,y} f_0(x_1,x_2) f_1(x_1+P(y),x_2) f_2(x_1,x_2+Q(y))\biggr\rvert\\
        \leq\lVert f_0 \rVert_2 \lVert F_1(n_1,\bh) \rVert_{l^2_{n_1,\bh}}^{1/2}\lVert F_2(m_2,\bh) \rVert_{l^2_{m_2,\bh}}^{1/4}\lVert \overline{F_2}(m_2^{\prime},\bh)F_2(m_2^{\dprime},\bh) \rVert_{l^2_{m_2^{\prime},m_2^{\dprime},\bh}}^{1/8}\biggl(\frac{1}{p^6}|Y(\Fp)|\biggr)^{1/16}
    \end{multline*}
    for all functions $f_0,f_1,f_2:\Fp^2\longrightarrow \C$.
\end{prop}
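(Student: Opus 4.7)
The plan is to derive the bound by four Cauchy--Schwarz steps, each doubling the number of $y$-variables (from one to sixteen) and each contributing a square root, so that the exponents $1/2, 1/4, 1/8, 1/16$ on the right-hand side correspond exactly to four successive square roots.

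For the first step, I apply Cauchy--Schwarz in $(x_1, x_2)$ to extract $\lVert f_0 \rVert_2$, which, after expanding the square, leaves an average in $x_1, x_2, y, y'$ of the product $f_1(x_1 + P(y), x_2) \overline{f_1}(x_1 + P(y'), x_2) f_2(x_1, x_2 + Q(y)) \overline{f_2}(x_1, x_2 + Q(y'))$. Applying Fourier inversion to each of the four functions and averaging over $(x_1, x_2)$ forces the constraint $\bn - \bn' + \bm - \bm' = \mathbf{0}$; setting $\bh := \bn - \bn'$, regrouping the Fourier products as difference operators, and then summing over $n_2$ and $m_1$ (which appear in the amplitudes but not in the phase) collapses the expression to
\[
\sum_{n_1, m_2, \bh} K_1(n_1, m_2, \bh)\, F_1(n_1, \bh)\, F_2(m_2, \bh),
\]
where $K_1(n_1, m_2, \bh) = \E_{y, y'} e_p\bigl(n_1 P(y) - (n_1 - h_1) P(y') + m_2 Q(y) - (m_2 + h_2) Q(y')\bigr)$. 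This Fourier half is essentially the calculation of Han--Lacey--Yang.

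The remaining three Cauchy--Schwarz steps peel off $F_1$, one copy of $F_2$, and the cross-term $\overline{F_2}(m_2^{\prime}, \bh) F_2(m_2^{\dprime}, \bh)$ in turn. First, Cauchy--Schwarz in $(n_1, \bh)$ extracts $\lVert F_1 \rVert_{l^2_{n_1, \bh}}$; expanding the square doubles the $y$-variables to four and introduces a dual variable $m_2^{\prime}$. Second, Cauchy--Schwarz in $(m_2, \bh)$ extracts $\lVert F_2 \rVert_{l^2_{m_2, \bh}}$, doubles the $y$-variables to eight, and introduces duals $n_1^{\prime}$ and $m_2^{\dprime}$. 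Third, Cauchy--Schwarz in $(m_2^{\prime}, m_2^{\dprime}, \bh)$ extracts $\lVert \overline{F_2}(m_2^{\prime}, \bh) F_2(m_2^{\dprime}, \bh) \rVert_{l^2_{m_2^{\prime}, m_2^{\dprime}, \bh}}$, doubles the $y$-variables to sixteen, and leaves ten surviving frequency variables (four of $n_1$-type, four of $m_2$-type, and $h_1, h_2$). Finally, each surviving frequency variable appears linearly in the accumulated phase with coefficient a $\Q$-linear combination of $P(y_i)$'s or $Q(y_i)$'s; orthogonality contributes $p$ times the indicator that this coefficient vanishes. Careful sign tracking identifies the ten constraints with equations \eqref{eqs: roth1}--\eqref{eqs: roth10}: the four $n_1$-summations yield \eqref{eqs: roth1}--\eqref{eqs: roth4}, the four $m_2$-summations yield \eqref{eqs: roth5}--\eqref{eqs: roth8}, and the $h_1, h_2$ summations yield the two eight-term equations \eqref{eqs: roth9}--\eqref{eqs: roth10}. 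The overall normalization $p^{10 - 16} = p^{-6}$ combined with the point count $\lvert Y(\Fp) \rvert$ yields the factor $\lvert Y(\Fp) \rvert / p^6$.

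The main obstacle is choosing the Cauchy--Schwarz variables so that the ten resulting constraints are exactly the Roth variety equations. A naive choice -- say, Cauchy--Schwarzing over all of $(n_1, m_2, \bh)$ when peeling off $F_1$ -- would discard the cancellation between the kernel $K_1$ and the amplitude $F_2$, leaving too few independent constraints and producing a variety of wrong (larger) expected dimension. Following the idea of Kavrut--Wu, the correct recipe keeps $m_2$ inside the kernel when peeling off $F_1$, and only at the last step introduces the disjoint dual variables $m_2^{\prime}, m_2^{\dprime}$ via the $\overline{F_2} F_2$ cross-term; this preserves enough cancellation to produce the two eight-term equations \eqref{eqs: roth9}--\eqref{eqs: roth10} needed for $Y$ to have the expected six-dimensional structure. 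The remaining index bookkeeping, while tedious, is routine.
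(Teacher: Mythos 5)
Your proposal is correct and follows essentially the same route as the paper: the same initial Cauchy--Schwarz in $(x_1,x_2)$ followed by the Han--Lacey--Yang Fourier expansion to reach $\sum_{n_1,m_2,\bh} F_1 F_2\,\Delta_{(-h_1,h_2)}K$, then Cauchy--Schwarz in $(n_1,\bh)$, then in $(m_2,\bh)$, and finally in $(m_2',m_2'',\bh)$, with the Kavrut--Wu observation that the intermediate step must keep $m_2$ inside the kernel so that the final orthogonality yields ten independent constraints and a sixteen-variable variety of expected dimension six. The only minor discrepancy is in the motivational aside: the paper's ``tempting but wrong'' Cauchy--Schwarz is in $(\bh,m_2,m_2')$ on $\Lambda_1$ rather than in $(n_1,m_2,\bh)$ when peeling off $F_1$, but this does not affect the proof.
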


\begin{proof}
    For notational convenience, denote 
    \begin{equation*}
        \Lambda_0 \coloneqq \biggl\lvert\E_{x_1,x_2,y} f_0(x_1,x_2) f_1(x_1+P(y),x_2) f_2(x_1,x_2+Q(y))\biggr\rvert
    \end{equation*}
    to be the quantity we would like to estimate.
    
    Applying Cauchy--Schwarz inequality in $x_1,x_2$, we have
    \begin{equation*}
        \begin{split}
            \Lambda_0 &=\biggl\lvert\E_{x_1,x_2} f_0(x_1,x_2) \E_{y}f_1(x_1+P(y),x_2) f_2(x_1,x_2+Q(y))\biggr\rvert\\
            &\leq \lVert f_0 \rVert_2 \Biggl(\E_{x_1,x_2}\biggl\lvert\E_{y}f_1(x_1+P(y),x_2) f_2(x_1,x_2+Q(y))\biggr\rvert^2\Biggr)^{1/2}.
        \end{split}
    \end{equation*}
    
    Fourier expand the functions $f_1,f_2$ to obtain
    \begin{equation}\label{eqs: PET1}
        \begin{split}
            &\E_{y}f_1(x_1+P(y),x_2)f_2(x_1,x_2+Q(y))\\
            &=\E_{y}\sum_{\bn,\bm} \widehat{f}_1(\bn)e_p(\bn\cdot\bx)e_p(n_1P(y))\widehat{f}_2(\bm)e_p(\bm\cdot\bx)e_p(m_2Q(y))\\
            &=\sum_{\bn,\bm} \widehat{f}_1(\bn)\widehat{f}_2(\bm)e_p((\bn+\bm)\cdot\bx)\E_{y}e_p(n_1P(y)+m_2Q(y)).
        \end{split}
    \end{equation}

    Denote $K(a,b)=\E_{y}e_p(aP(y)+bQ(y))$ and perform change of variables $\bn\leftrightarrow\bn-\bm$, then the expression \eqref{eqs: PET1} is equal to
    \begin{equation}\label{eqs: PET2}
        \begin{split}
            &\sum_{\bn,\bm} \widehat{f}_1(\bn)\widehat{f}_2(\bm)e_p((\bn+\bm)\cdot\bx)K(n_1,m_2)\\
            &=\sum_{\bn,\bm} \widehat{f}_1(\bn-\bm)\widehat{f}_2(\bm)e_p(\bn\cdot\bx)K(n_1-m_1,m_2)\\
            &=\sum_{\bn}\biggl(\sum_{\bm} \widehat{f}_1(\bn-\bm)\widehat{f}_2(\bm)K(n_1-m_1,m_2)\biggr)e_p(\bn\cdot\bx)
        \end{split}
    \end{equation}

    Combining \eqref{eqs: PET1} and \eqref{eqs: PET2}, Parseval's identity says
    \begin{equation}\label{eqs: PET3}
        \begin{split}            &\E_{x_1,x_2}\biggl\lvert\E_{y}f_1(x_1+P(y),x_2) f_2(x_1,x_2+Q(y))\biggr\rvert^2\\
        &=\sum_{\bn}\Biggl\lvert\sum_{\bm} \widehat{f}_1(\bn-\bm)\widehat{f}_2(\bm)K(n_1-m_1,m_2)\Biggr\rvert^2.
        \end{split}
    \end{equation}

    Expanding the squares and performing change of variables $\bn\leftrightarrow\bn+\bm$, the expression \eqref{eqs: PET3} becomes
    \begin{equation}\label{eqs: PET4}
        \begin{split}
            &\sum_{\bn}\sum_{\bm,\bh} \widehat{f}_1(\bn-\bm)\overline{\widehat{f}_1}(\bn-\bm-\bh)\widehat{f}_2(\bm)\overline{\widehat{f}_2}(\bm+\bh)K(n_1-m_1,m_2)\overline{K}(n_1-m_1-h_1,m_2+h_2)\\
            &=\sum_{\bn,\bm,\bh} \widehat{f}_1(\bn)\overline{\widehat{f}_1}(\bn-\bh)\widehat{f}_2(\bm)\overline{\widehat{f}_2}(\bm+\bh)K(n_1,m_2)\overline{K}(n_1-h_1,m_2+h_2)\\
            &=\sum_{\bn,\bm,\bh} \Delta_{-\bh}\widehat{f}_1(\bn)\Delta_{\bh}\widehat{f}_2(\bm)\Delta_{(-h_1,h_2)}K(n_1,m_2).
        \end{split}
    \end{equation}
    To clarify, for the notation $\Delta_{\bh}\widehat{f}$, the order of operators is applying Fourier transform first and then difference operator. 

    Note that in the last line of \eqref{eqs: PET4}, some variables decouple, hence we can write \eqref{eqs: PET4} as
    \begin{equation}\label{eqs: PET5}
        \sum_{\bh,n_1,m_2} \Delta_{(-h_1,h_2)}K(n_1,m_2) \Biggl(\sum_{n_2}\Delta_{-\bh}\widehat{f}_1(n_1,n_2)\Biggr) \Biggl(\sum_{m_1}\Delta_{\bh}\widehat{f}_2(m_1,m_2)\Biggr).
    \end{equation}

    To ease the notations, define
    \begin{align*}
        F_1(n_1,\bh)&\coloneqq\sum_{n_2}\Delta_{-\bh}\widehat{f}_1(n_1,n_2)\\
        F_2(m_2,\bh)&\coloneqq\sum_{m_1}\Delta_{\bh}\widehat{f}_2(m_1,m_2),
    \end{align*}
    so that the expression \eqref{eqs: PET5} can be rewritten more succinctly as 
    \begin{equation}\label{eqs: PET6}
        \sum_{\bh,n_1,m_2} F_1(n_1,\bh)F_2(m_2,\bh)\Delta_{(-h_1,h_2)}K(n_1,m_2). 
    \end{equation}

    Applying Cauchy--Schwarz inequality in variables $\bh,n_1$, we deduce that
    \begin{equation*}
        \begin{split}
            \eqref{eqs: PET6}&=\sum_{\bh,n_1} F_1(n_1,\bh)\sum_{m_2}F_2(m_2,\bh)\Delta_{(-h_1,h_2)}K(n_1,m_2)\\
            &\leq \lVert F_1(n_1,\bh) \rVert_{l^2_{n_1,\bh}} \Biggl(\sum_{\bh,n_1}\sum_{m_2,m_2^{\prime}} F_2(m_2,\bh)\overline{F_2}(m_2^{\prime},\bh)\Delta_{(-h_1,h_2)}K(n_1,m_2)\overline{\Delta_{(-h_1,h_2)}K}(n_1,m_2^{\prime}) \Bigg)^{1/2}.
        \end{split}
    \end{equation*}

    For notational convenience, denote
    \begin{equation*}
        \Lambda_1\coloneqq \sum_{\bh,n_1}\sum_{m_2,m_2^{\prime}} F_2(m_2,\bh)\overline{F_2}(m_2^{\prime},\bh)\Delta_{(-h_1,h_2)}K(n_1,m_2)\overline{\Delta_{(-h_1,h_2)}K}(n_1,m_2^{\prime}).
    \end{equation*}
    
    To sum up, so far we have obtained
    \begin{equation}\label{eqs: PET7}
        \Lambda_0\leq \lVert f_0 \rVert_2 \lVert F_1(n_1,\bh) \rVert_{l^2_{n_1,\bh}}^{1/2} \Lambda_1^{1/4}
    \end{equation}
    and we have Cauchy--Schwarz away the functions $f_0,f_1$. It is tempting to apply Cauchy--Schwarz inequality to $\Lambda_1$ in variables $\bh,m_2,m_2^{\prime}$ to get rid of $f_2$. However, in this way, we would discard too much cancellation and the resulting variety has degeneracy, that is, the variety is defined by six equations in eight variables, but its dimension is three. To fix this, we use a technique due to Kavrut and Wu \cite{kavrut-wu} to iterate the exponential sums one more time in order to capture more cancellation. Hence, we apply Cauchy--Schwarz inequality to $\Lambda_1$ in variables $\bh,m_2$ to obtain
    \begin{equation}\label{eqs: PET8}
        \begin{split}
            \Lambda_1&=\sum_{\bh,m_2}F_2(m_2,\bh)\sum_{n_1,m_2^{\prime}} \overline{F_2}(m_2^{\prime},\bh)\Delta_{(-h_1,h_2)}K(n_1,m_2)\overline{\Delta_{(-h_1,h_2)}K}(n_1,m_2^{\prime})\\
            &\leq \lVert F_2(m_2,\bh) \rVert_{l^2_{m_2,\bh}}\\
            &\quad\times\vast(\sum_{\bh,m_2}\sum_{\substack{n_1,m_2^{\prime}\\n_1^{\prime},m_2^{\dprime}}} \overline{F_2}(m_2^{\prime},\bh)F_2(m_2^{\dprime},\bh)\Delta_{(-h_1,h_2)}K(n_1,m_2)\overline{\Delta_{(-h_1,h_2)}K}(n_1^{\prime},m_2)\\
            &\quad\quad\quad\quad\quad\quad\quad\quad\quad\quad\quad\quad\quad\quad\quad\quad
            \overline{\Delta_{(-h_1,h_2)}K}(n_1,m_2^{\prime})\Delta_{(-h_1,h_2)}K(n_1^{\prime},m_2^{\dprime})\vast)^{1/2},
        \end{split}
    \end{equation}
    where $m_2^{\dprime},n_1^{\prime}$ are the copies of $m_2^{\prime},n_1$ respectively.

    Once again, for notational convenience, denote
    \begin{equation*}
        \begin{split}
            \Lambda_2&\coloneqq\sum_{\bh,m_2}\sum_{\substack{n_1,m_2^{\prime}\\n_1^{\prime},m_2^{\dprime}}} \overline{F_2}(m_2^{\prime},\bh)F_2(m_2^{\dprime},\bh)\Delta_{(-h_1,h_2)}K(n_1,m_2)\overline{\Delta_{(-h_1,h_2)}K}(n_1^{\prime},m_2)\\
            &\quad\quad\quad\quad\quad\quad\quad\quad\quad\quad\quad\quad\quad\quad
            \overline{\Delta_{(-h_1,h_2)}K}(n_1,m_2^{\prime})\Delta_{(-h_1,h_2)}K(n_1^{\prime},m_2^{\dprime})
        \end{split}
    \end{equation*}
    and \eqref{eqs: PET8} can be rewritten as
    \begin{equation}\label{eqs: PET9}
        \Lambda_1\leq\lVert F_2(m_2,\bh) \rVert_{l^2_{m_2,\bh}}\Lambda_2^{1/2}.
    \end{equation}

    Now, we have kept enough cancellation, and we can apply Cauchy--Schwarz inequality to $\Lambda_2$ in variables $\bh,m_2^{\prime},m_2^{\dprime}$ to get rid of $F_2$ and transform exponential sums into point-counting on variety. Hence, we have
    \begin{equation}\label{eqs: PET10}
        \begin{split}
            \Lambda_2&=\sum_{\bh,m_2^{\prime},m_2^{\dprime}}\overline{F_2}(m_2^{\prime},\bh)F_2(m_2^{\dprime},\bh)\sum_{n_1,n_1^{\prime},m_2}\Delta_{(-h_1,h_2)}K(n_1,m_2)\overline{\Delta_{(-h_1,h_2)}K}(n_1^{\prime},m_2)\\
            &\quad\quad\quad\quad\quad\quad\quad\quad\quad\quad\quad\quad\quad\quad\quad\quad\quad
            \overline{\Delta_{(-h_1,h_2)}K}(n_1,m_2^{\prime})\Delta_{(-h_1,h_2)}K(n_1^{\prime},m_2^{\dprime})\\
            &\leq \lVert \overline{F_2}(m_2^{\prime},\bh)F_2(m_2^{\dprime},\bh) \rVert_{l^2_{m_2^{\prime},m_2^{\dprime},\bh}}\\
            &\quad\times\vast(\sum_{\bh,m_2^{\prime},m_2^{\dprime}}\sum_{\substack{n_1,n_1^{\prime},m_2\\N_1,N_1^{\prime},M_2}}\Delta_{(-h_1,h_2)}K(n_1,m_2)\overline{\Delta_{(-h_1,h_2)}K}(n_1^{\prime},m_2)\overline{\Delta_{(-h_1,h_2)}K}(n_1,m_2^{\prime})\Delta_{(-h_1,h_2)}K(n_1^{\prime},m_2^{\dprime})\\
            &\quad\quad\quad\quad\quad\quad\quad 
            \overline{\Delta_{(-h_1,h_2)}K}(N_1,M_2)\Delta_{(-h_1,h_2)}K(N_1^{\prime},M_2)\Delta_{(-h_1,h_2)}K(N_1,m_2^{\prime})\overline{\Delta_{(-h_1,h_2)}K}(N_1^{\prime},m_2^{\dprime})\vast)^{1/2},
        \end{split}
    \end{equation}
    where $N_1,N_1^{\prime},M_2$ are the copies of $n_1,n_1^{\prime},m_2$ respectively.

    Finally, expanding the exponential sums, we have
    \begin{equation}\label{eqs: PET11}
        \begin{split}
            &\sum_{\bh,m_2^{\prime},m_2^{\dprime}}\sum_{\substack{n_1,n_1^{\prime},m_2\\N_1,N_1^{\prime},M_2}}\Delta_{(-h_1,h_2)}K(n_1,m_2)\overline{\Delta_{(-h_1,h_2)}K}(n_1^{\prime},m_2)\overline{\Delta_{(-h_1,h_2)}K}(n_1,m_2^{\prime})\Delta_{(-h_1,h_2)}K(n_1^{\prime},m_2^{\dprime})\\
            &\quad\quad\quad\quad\quad\quad\quad 
            \overline{\Delta_{(-h_1,h_2)}K}(N_1,M_2)\Delta_{(-h_1,h_2)}K(N_1^{\prime},M_2)\Delta_{(-h_1,h_2)}K(N_1,m_2^{\prime})\overline{\Delta_{(-h_1,h_2)}K}(N_1^{\prime},m_2^{\dprime})\\
            &=\E_{\substack{y_1,y_2,y_3,y_4\\y_5,y_6,y_7,y_8\\y_9,y_{10},y_{11},y_{12}\\y_{13},y_{14},y_{15},y_{16}}}\sum_{\substack{h_1,h_2\\n_1,n_1^{\prime},N_1,N_1^{\prime}\\m_2,m_2^{\prime},m_2^{\dprime},M_2}}\\
            &\quad\quad\quad\quad\quad\quad\quad
            e_p\Bigl(+n_1P(y_1)+m_2Q(y_1) \quad -(n_1-h_1)P(y_9)-(m_2+h_2)Q(y_9)\\
            &\quad\quad\quad\quad\quad\quad\quad\quad\,
            -n_1^{\prime}P(y_2)-m_2Q(y_2) \quad +(n_1^{\prime}-h_1)P(y_{10})+(m_2+h_2)Q(y_{10})\\
            &\quad\quad\quad\quad\quad\quad\quad\quad\,
            -n_1P(y_3)-m_2^{\prime}Q(y_3) \quad +(n_1-h_1)P(y_{11})+(m_2^{\prime}+h_2)Q(y_{11})\\
            &\quad\quad\quad\quad\quad\quad\quad\quad\,
            +n_1^{\prime}P(y_4)+m_2^{\dprime}Q(y_4) \quad -(n_1^{\prime}-h_1)P(y_{12})-(m_2^{\dprime}+h_2)Q(y_{12})\\
            &\quad\quad\quad\quad\quad\quad\quad\quad\,
            -N_1P(y_5)-M_2Q(y_5) \quad +(N_1-h_1)P(y_{13})+(M_2+h_2)Q(y_{13})\\
            &\quad\quad\quad\quad\quad\quad\quad\quad\,
            +N_1^{\prime}P(y_6)+M_2Q(y_6) \quad -(N_1^{\prime}-h_1)P(y_{14})-(M_2+h_2)Q(y_{14})\\
            &\quad\quad\quad\quad\quad\quad\quad\quad\,
            +N_1P(y_7)+m_2^{\prime}Q(y_7) \quad -(N_1-h_1)P(y_{15})-(m_2^{\prime}+h_2)Q(y_{15})\\
            &\quad\quad\quad\quad\quad\quad\quad\quad\,
            -N_1^{\prime}P(y_8)-m_2^{\dprime}Q(y_8) \quad +(N_1^{\prime}-h_1)P(y_{16})+(m_2^{\dprime}+h_2)Q(y_{16})\Bigr)
        \end{split}
    \end{equation}
    which is equal to
    \begin{equation}\label{eqs: PET12}
        \begin{split}
            &=\E_{\substack{y_1,y_2,y_3,y_4\\y_5,y_6,y_7,y_8\\y_9,y_{10},y_{11},y_{12}\\y_{13},y_{14},y_{15},y_{16}}}\sum_{\substack{h_1,h_2\\n_1,n_1^{\prime},N_1,N_1^{\prime}\\m_2,m_2^{\prime},m_2^{\dprime},M_2}}\\
            &\quad\quad\quad\quad\quad\quad\quad
            e_p\Bigl(n_1(P(y_1)-P(y_3)-P(y_9)+P(y_{11}))\\
            &\quad\quad\quad\quad\quad\quad\quad\quad\,\,\,
            +n_1^{\prime}(-P(y_2)+P(y_4)+P(y_{10})-P(y_{12}))\\
            &\quad\quad\quad\quad\quad\quad\quad\quad\,\,\,
            +N_1(-P(y_5)+P(y_7)+P(y_{13})-P(y_{15}))\\
            &\quad\quad\quad\quad\quad\quad\quad\quad\,\,\,
            +N_1^{\prime}(P(y_6)-P(y_8)-P(y_{14})+P(y_{16}))\\
            &\quad\quad\quad\quad\quad\quad\quad\quad\,\,\,
            +m_2(Q(y_1)-Q(y_2)-Q(y_{9})+Q(y_{10}))\\
            &\quad\quad\quad\quad\quad\quad\quad\quad\,\,\,
            +m_2^{\prime}(-Q(y_3)+Q(y_7)+Q(y_{11})-Q(y_{15}))\\
            &\quad\quad\quad\quad\quad\quad\quad\quad\,\,\,
            +m_2^{\dprime}(Q(y_4)-Q(y_8)-Q(y_{12})+Q(y_{16}))\\
            &\quad\quad\quad\quad\quad\quad\quad\quad\,\,\,
            +M_2(-Q(y_5)+Q(y_6)+Q(y_{13})-Q(y_{14}))\\
            &\quad\quad\quad\quad\quad\quad\quad\quad\,\,\,
            +h_1(P(y_{9})-P(y_{10})-P(y_{11})+P(y_{12})-P(y_{13})+P(y_{14})+P(y_{15})-P(y_{16}))\\
            &\quad\quad\quad\quad\quad\quad\quad\quad\,\,\,
            +h_2(-Q(y_{9})+Q(y_{10})+Q(y_{11})-Q(y_{12})+Q(y_{13})-Q(y_{14})-Q(y_{15})+Q(y_{16}))\Bigr)\\
            &=\frac{1}{p^6}|Y(\Fp)|
        \end{split}
    \end{equation}
    where the last line of \eqref{eqs: PET12} follows from the orthogonality of characters and the definition of the Roth variety.

    To complete the proof, we combine \eqref{eqs: PET7}, \eqref{eqs: PET9}, \eqref{eqs: PET10}, \eqref{eqs: PET11} and \eqref{eqs: PET12}, and conclude
    \begin{align*}        
        \Lambda_0&\leq \lVert f_0 \rVert_2 \lVert F_1(n_1,\bh) \rVert_{l^2_{n_1,\bh}}^{1/2} \Lambda_1^{1/4}\\
        &\leq\lVert f_0 \rVert_2 \lVert F_1(n_1,\bh) \rVert_{l^2_{n_1,\bh}}^{1/2}\lVert F_2(m_2,\bh) \rVert_{l^2_{m_2,\bh}}^{1/4}\Lambda_2^{1/8}.\\
        &\leq\lVert f_0 \rVert_2 \lVert F_1(n_1,\bh) \rVert_{l^2_{n_1,\bh}}^{1/2}\lVert F_2(m_2,\bh) \rVert_{l^2_{m_2,\bh}}^{1/4}\lVert \overline{F_2}(m_2^{\prime},\bh)F_2(m_2^{\dprime},\bh) \rVert_{l^2_{m_2^{\prime},m_2^{\dprime},\bh}}^{1/8}\biggl(\frac{1}{p^6}|Y(\Fp)|\biggr)^{1/16}.
    \end{align*}
\end{proof}

\begin{rem}
    Note that we do not consider summing over $n_1$ for $\Lambda_1$ to exploit the cancellation among exponential sums since we want to avoid difficult multidimensional exponential sums with rational function phase functions.
\end{rem}

The next proposition is the first part of simplifications of Fourier coefficients in Proposition \ref{prop: PET}. Similar calculations also appear in \cite{han-lacey-yang}.

\begin{prop}\label{prop: Fourier1}
    Let $f_1,f_2:\Fp^2\longrightarrow\C$ be functions, define 
    \begin{equation*}
        F_1(n_1,\bh)=\sum_{n_2}\Delta_{-\bh}\widehat{f}_1(n_1,n_2),\quad F_2(m_2,\bh)=\sum_{m_1}\Delta_{\bh}\widehat{f}_2(m_1,m_2),
    \end{equation*}
    then we have
    \begin{equation*}
        \lVert F_1(n_1,\bh) \rVert_{l^2_{n_1,\bh}}\leq \lVert f_1 \rVert_4^2,\quad \lVert F_2(m_2,\bh) \rVert_{l^2_{m_2,\bh}}\leq \lVert f_2 \rVert_4^2.
    \end{equation*}
\end{prop}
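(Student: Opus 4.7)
The plan is to unfold the Fourier transforms in the definition of $F_1$, collapse the inner $n_2$-summation via orthogonality of characters, and recognize the resulting expression as a three-dimensional Fourier transform on $\Fp^3$ so that Parseval's identity directly converts $\lVert F_1 \rVert_{l^2_{n_1,\bh}}^2$ into the $L^2$-norm of an explicit ``diagonal'' function. First I would substitute $\widehat{f}_1(\bn) = \E_{\bx} f_1(\bx) e_p(-\bn\cdot\bx)$ into
\begin{equation*}
F_1(n_1, \bh) = \sum_{n_2} \widehat{f}_1(n_1, n_2) \overline{\widehat{f}_1}(n_1 - h_1, n_2 - h_2),
\end{equation*}
interchange the $n_2$-sum with the $\bx, \bx'$ averages, and apply $\sum_{n_2} e_p(n_2(x_2' - x_2)) = p \cdot \mathbf{1}_{x_2 = x_2'}$. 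After setting $x_2' = x_2$ and making the change of variables $u = x_1' - x_1$, the result is
\begin{equation*}
F_1(n_1, \bh) = \E_{u, x_1, x_2} \tilde g(u, x_1, x_2)\, e_p\bigl((n_1 - h_1) u - h_1 x_1 - h_2 x_2\bigr) = \widehat{\tilde g}(h_1 - n_1,\, h_1,\, h_2),
\end{equation*}
where $\tilde g(u, x_1, x_2) := f_1(x_1, x_2) \overline{f_1}(x_1 + u, x_2)$.

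Since $(n_1, h_1, h_2) \mapsto (h_1 - n_1,\, h_1,\, h_2)$ is a linear bijection of $\Fp^3$, Parseval's identity for the three-dimensional Fourier transform then gives
\begin{equation*}
\lVert F_1 \rVert_{l^2_{n_1, \bh}}^2 = \lVert \widehat{\tilde g} \rVert_{l^2}^2 = \lVert \tilde g \rVert_2^2 = \E_{x_1, x_1', x_2} |f_1(x_1, x_2)|^2 |f_1(x_1', x_2)|^2 = \E_{x_2} \bigl(\E_{x_1} |f_1(x_1, x_2)|^2\bigr)^2.
\end{equation*}
Applying Cauchy--Schwarz (or Jensen) in $x_1$ to move the square inside the inner expectation, this is at most $\E_{x_1, x_2} |f_1(x_1, x_2)|^4 = \lVert f_1 \rVert_4^4$; taking square roots proves the first inequality.

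The bound on $F_2$ is completely analogous, with the roles of the two coordinates swapped. The inner $m_1$-summation now produces $p \cdot \mathbf{1}_{x_1 = x_1'}$, so the relevant diagonal function becomes $\tilde g_2(v, x_1, x_2) := f_2(x_1, x_2) \overline{f_2}(x_1, x_2 + v)$, and the same Parseval plus Cauchy--Schwarz chain yields $\lVert F_2 \rVert_{l^2_{m_2, \bh}}^2 \leq \E_{x_1} \bigl(\E_{x_2} |f_2(x_1, x_2)|^2\bigr)^2 \leq \lVert f_2 \rVert_4^4$. There is no real obstacle here; the proof is routine Fourier manipulation, and the one conceptual takeaway is that collapsing the ``orthogonal'' summation variable reduces the problem to a genuine Parseval identity on $\Fp^3$ for a function supported on a diagonal slice.
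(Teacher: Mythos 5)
Your proof is correct and takes essentially the same route as the paper's: collapse the orthogonal summation variable via orthogonality of characters, change variables, apply Parseval on $\Fp^3$, and finish with Cauchy--Schwarz on the resulting $\E_{x_2}(\E_{x_1}|f_1|^2)^2$. The only cosmetic difference is that you make the three-dimensional Fourier transform and the linear bijection on frequencies explicit, whereas the paper folds these into the change of variables before invoking Parseval.
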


\begin{proof}
    We would prove the estimate for $F_2$ and the estimate for $F_1$ follows similarly by symmetry. By definition of $F_2$ and Fourier transform, we have
    \begin{equation}\label{eqs: F1_1}
        \begin{split}
            F_2(m_2,\bh)&=\sum_{m_1}\Delta_{\bh}\widehat{f}_2(m_1,m_2)\\
            &=\sum_{m_1}\widehat{f}_2(m_1,m_2)\overline{\widehat{f}_2}(m_1+h_1,m_2+h_2)\\
            &=\sum_{m_1}\E_{\bx,\bz}f_2(\bx)e_p(-\bm\cdot\bx)\overline{f_2}(\bz)e_p(\bm\cdot\bz)e_p(\bh\cdot\bz).
        \end{split}
    \end{equation}

    Summing over variable $m_1$ to utilize the orthogonality of characters, the expression \eqref{eqs: F1_1} becomes
    \begin{equation}\label{eqs: F1_2}
        \begin{split}
            &\E_{x_2,z_1,z_2}f_2(z_1,x_2)\overline{f_2}(z_1,z_2)e_p(m_2(z_2-x_2))e_p(\bh\cdot\bz)\\
        &=\E_{x_2,z_1,z_2}f_2(z_1,x_2+z_2)\overline{f_2}(z_1,z_2)e_p(\bh\cdot\bz-m_2x_2)
        \end{split}
    \end{equation}
    where the last line of \eqref{eqs: F1_2} follows from change of variables $x_2\leftrightarrow x_2+z_2$.

    Combining \eqref{eqs: F1_1} and \eqref{eqs: F1_2}, Parseval's identity says
    \begin{equation}\label{eqs: F1_3}
        \lVert F_2(m_2,\bh) \rVert_{l^2_{m_2,\bh}}^2=\sum_{m_2,\bh} \bigl\lvert F_2(m_2,\bh) \bigr\rvert^2=\E_{x_2,z_1,z_2}\bigl\lvert f_2(z_1,x_2+z_2)\overline{f_2}(z_1,z_2)\bigr\rvert^2.
    \end{equation}

    Note that
    \begin{equation}\label{eqs: F1_4}
        \begin{split}
            \E_{x_2,z_1,z_2}\bigl\lvert f_2(z_1,x_2+z_2)\overline{f_2}(z_1,z_2)\bigr\rvert^2&=\E_{x_2,z_1,z_2}\bigl\lvert f_2(z_1,x_2)f_2(z_1,z_2)\bigr\rvert^2\\
            &=\E_{z_1}\biggl(\E_{x_2}\bigl\lvert f_2(z_1,x_2)\bigr\rvert^2\biggr)\biggl(\E_{z_2}\bigl\lvert f_2(z_1,z_2)\bigr\rvert^2\biggr)\\
            &=\E_{z_1}\biggl(\E_{z_2}\bigl\lvert f_2(z_1,z_2)\bigr\rvert^2\biggr)^2
        \end{split}
    \end{equation}

    Finally, applying Cauchy--Schwarz inequality to the inner sum of the last line of \eqref{eqs: F1_4}, we have
    \begin{equation}\label{eqs: F1_5}
        \E_{z_1}\biggl(\E_{z_2}\bigl\lvert f_2(z_1,z_2)\bigr\rvert^2\biggr)^2\leq \E_{z_1}\E_{z_2}\bigl\lvert f_2(z_1,z_2)\bigr\rvert^4=\lVert f_2 \rVert_4^4.
    \end{equation}
    Combining \eqref{eqs: F1_3}, \eqref{eqs: F1_4} and \eqref{eqs: F1_5} would complete the proof.
\end{proof}

The next proposition is the second part of simplifications of Fourier coefficients in Proposition \ref{prop: PET}. Compared to the Proposition \ref{prop: Fourier1}, its proof is more complicated and involves Gowers norms.

\begin{prop}\label{prop: Fourier2}
    Let $f_2:\Fp^2\longrightarrow\C$ be a $1$-bounded function and define $F_2(m_2,\bh)=\sum_{m_1}\Delta_{\bh}\widehat{f}_2(m_1,m_2)$. Then, we have
    \begin{equation*}
        \lVert \overline{F_2}(m_2,\bh)F_2(m_2^{\prime},\bh) \rVert_{l^2_{m_2,m_2^{\prime},\bh}}\leq \lVert f_2 \rVert_{U^2(0\times\Fp)}^2.
    \end{equation*}
\end{prop}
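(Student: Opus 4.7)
The plan is to reduce the LHS to a Gowers-box expectation of $f_2$ via two rounds of Parseval, and then to use the $1$-boundedness of $f_2$ to save the correct exponent.

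First, write $\lVert \overline{F_2}(m_2, \bh) F_2(m_2', \bh) \rVert_{l^2}^2 = \sum_\bh \bigl(\sum_{m_2} |F_2(m_2, \bh)|^2\bigr)^2$. Starting from the physical-space expression $F_2(m_2, \bh) = \E_{z_1, x_2, z_2} f_2(z_1, x_2+z_2) \overline{f_2(z_1, z_2)} e_p(\bh \cdot \bz - m_2 x_2)$ already used in the proof of Proposition \ref{prop: Fourier1}, and summing out $m_2$ by orthogonality of characters (which forces the two $x_2$ copies to coincide), a change of variable yields
\[
\sum_{m_2} |F_2(m_2, \bh)|^2 = \E_{x_2} |\widehat{F_{x_2}}(\bh)|^2, \quad F_{x_2}(\bz) := \Delta_{(0, x_2)} f_2(\bz),
\]
where the hat denotes the $2$D Fourier transform on $\Fp^2$.

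Second, squaring and summing over $\bh$, I would apply the Wiener--Khinchin identity $|\widehat{g}(\bh)|^2 = \widehat{\overline{R_g}}(\bh)$ (with $\overline{R_g}(\mathbf{d}) := \E_\bx \overline{g(\bx)} g(\bx + \mathbf{d})$) together with the convolution-at-zero identity $\sum_\bh \widehat{G_1}(\bh) \widehat{G_2}(\bh) = \E_{\mathbf{d}} G_1(\mathbf{d}) G_2(-\mathbf{d})$ to collapse the inner $\bh$-sum. Unpacking the resulting autocorrelations of $F_{x_2}$ and $F_{X_2}$ in terms of $f_2$, the four factors recombine precisely into $\overline{\Delta_{\mathbf{d}} \Delta_{(0, x_2)} f_2}$ and $\Delta_{\mathbf{d}} \Delta_{(0, X_2)} f_2$, giving the clean identity
\[
\sum_{m_2, m_2', \bh} |F_2(m_2, \bh)|^2 |F_2(m_2', \bh)|^2 = \E_{\mathbf{d}} |M(\mathbf{d})|^2, \quad M(\mathbf{d}) := \E_{x_2, \bx} \Delta_{\mathbf{d}} \Delta_{(0, x_2)} f_2(\bx);
\]
observe that $\E_{\mathbf{d}} M(\mathbf{d})$ is exactly the Gowers box norm $\lVert f_2 \rVert_{\Fp^2, 0 \times \Fp}^4$.

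Finally, an interior change of variable $x_2 \mapsto y_2 - y_1$ inside $M$ rewrites
\[
M(\mathbf{d}) = \E_{x_1} \bigl| \E_{y_1} f_2(x_1, y_1) \overline{f_2(x_1 + d_1, y_1 + d_2)} \bigr|^2,
\]
which is manifestly real non-negative and is bounded by $1$ when $f_2$ is $1$-bounded. Hence $M(\mathbf{d})^2 \leq M(\mathbf{d})$ pointwise, so
\[
\E_{\mathbf{d}} |M(\mathbf{d})|^2 \leq \E_{\mathbf{d}} M(\mathbf{d}) = \lVert f_2 \rVert_{\Fp^2, 0 \times \Fp}^4 \leq \lVert f_2 \rVert_{U^2(0 \times \Fp)}^4
\]
by the subgroup monotonicity of the box norm (using $0 \times \Fp \subset \Fp^2$), and taking square roots yields the proposition. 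The main obstacle is the Plancherel bookkeeping in step two: one must track the conjugations carefully so that the eight factors of $f_2$ assemble exactly into a single $\Delta_{\mathbf{d}} \Delta_{(0, x_2)} f_2$ expression and its conjugate, rather than into an unrelated Gowers configuration. A naive Cauchy--Schwarz at that point would produce only $\lVert f_2 \rVert_{\Fp^2, \Fp^2, 0 \times \Fp}^8$, a $U^3$-type quantity which is too weak; it is the non-negativity-plus-$1$-boundedness trick $M^2 \leq M$ that enables the sharper $U^2$ bound.
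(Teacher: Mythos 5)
Your proof is correct and follows essentially the same route as the paper: two rounds of Parseval (first in $m_2$, then in $\bh$ — your Wiener--Khinchin plus convolution-at-zero step is exactly the paper's Parseval in $\bh$, and your $M(\mathbf{d})$ is identically the paper's inner sum over $x_2,\bz$ indexed by $\bz'$), followed by the same nonnegativity-and-$1$-boundedness trick $M^2\le M$ and the same subgroup monotonicity of box norms. Your closing remark about the naive Cauchy--Schwarz producing only a $U^3$-type bound correctly identifies why the nonnegativity observation is the crux of the argument.
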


\begin{proof}
    By definition of $F_2$ and Fourier transform, we have
    \begin{equation}\label{eqs: F2_1}
        \begin{split}
            F_2(m_2,\bh)&=\sum_{m_1}\Delta_{\bh}\widehat{f}_2(m_1,m_2)\\
            &=\sum_{m_1}\widehat{f}_2(m_1,m_2)\overline{\widehat{f}_2}(m_1+h_1,m_2+h_2)\\
            &=\sum_{m_1}\E_{\bx,\bz}f_2(\bx)e_p(-\bm\cdot\bx)\overline{f_2}(\bz)e_p(\bm\cdot\bz)e_p(\bh\cdot\bz).
        \end{split}
    \end{equation}

    Summing over variable $m_1$ to utilize the orthogonality of characters, the expression \eqref{eqs: F2_1} becomes
    \begin{equation}\label{eqs: F2_2}
        \begin{split}
            &\E_{x_2,z_1,z_2}f_2(z_1,x_2)\overline{f_2}(z_1,z_2)e_p(m_2(z_2-x_2))e_p(\bh\cdot\bz)\\
        &=\E_{x_2}\biggl(\E_{z_1,z_2}f_2(z_1,x_2+z_2)\overline{f_2}(z_1,z_2)e_p(\bh\cdot\bz)\biggr)e_p(-m_2x_2)
        \end{split}
    \end{equation}
    where the last line of \eqref{eqs: F2_2} follows from change of variables $x_2\leftrightarrow x_2+z_2$.

    Combining \eqref{eqs: F2_1} and \eqref{eqs: F2_2}, Parseval's identity says
    \begin{equation}\label{eqs: F2_3}
        \begin{split}
            \sum_{m_2}\bigl\lvert F_2(m_2,\bh)\bigr\rvert^2
            &=\E_{x_2}\biggl\lvert\E_{\bz}f_2(z_1,x_2+z_2)\overline{f_2}(z_1,z_2)e_p(\bh\cdot\bz)\biggr\rvert^2\\
            &=\E_{x_2}\E_{\bz,\bz^{\prime}}\Delta_{\bz^{\prime}}f_2(z_1,x_2+z_2)\overline{\Delta_{\bz^{\prime}}f_2}(z_1,z_2)e_p(-\bh\cdot\bz^{\prime}).
        \end{split}
    \end{equation}

    Note that the quantity we would like to upper bound for this proof has the following decomposition:
    \begin{equation}\label{eqs: F2_4}
        \begin{split}
            \lVert \overline{F_2}(m_2,\bh)F_2(m_2^{\prime},\bh) \rVert_{l^2_{m_2,m_2^{\prime},\bh}}^2&=\sum_{m_2,m_2^{\prime},\bh}\bigl\lvert F_2(m_2,\bh)F_2(m_2^{\prime},\bh)\bigr\rvert^2\\
            &=\sum_{\bh}\Biggl(\sum_{m_2}\bigl\lvert F_2(m_2,\bh)\bigr\rvert^2\Biggr)\Biggl(\sum_{m_2^{\prime}}\bigl\lvert F_2(m_2^{\prime},\bh)\bigr\rvert^2\Biggr)\\
            &=\sum_{\bh}\Biggl(\sum_{m_2}\bigl\lvert F_2(m_2,\bh)\bigr\rvert^2\Biggr)^2.
        \end{split}
    \end{equation}

    By \eqref{eqs: F2_4}, \eqref{eqs: F2_3} and Parseval's identity, we have
    \begin{equation}\label{eqs: F2_5}
        \lVert \overline{F_2}(m_2,\bh)F_2(m_2^{\prime},\bh) \rVert_{l^2_{m_2,m_2^{\prime},\bh}}^2=\E_{\bz^{\prime}}\biggl\lvert\E_{x_2,\bz}\Delta_{\bz^{\prime}}f_2(z_1,x_2+z_2)\overline{\Delta_{\bz^{\prime}}f_2}(z_1,z_2)\biggr\rvert^2.
    \end{equation}

    Note that the inner sum of \eqref{eqs: F2_5} is nonnegative since
    \begin{equation*}
        \begin{split}
            \E_{x_2,\bz}\Delta_{\bz^{\prime}}f_2(z_1,x_2+z_2)\overline{\Delta_{\bz^{\prime}}f_2}(z_1,z_2)&=\E_{x_2,\bz}\Delta_{\bz^{\prime}}f_2(z_1,x_2)\overline{\Delta_{\bz^{\prime}}f_2}(z_1,z_2)\\
            &=\E_{z_1}\biggl(\E_{x_2}\Delta_{\bz^{\prime}}f_2(z_1,x_2)\biggr)\biggl(\E_{z_2}\overline{\Delta_{\bz^{\prime}}f_2}(z_1,z_2)\biggr)\\
            &=\E_{z_1}\biggl\lvert\E_{x_2}\Delta_{\bz^{\prime}}f_2(z_1,x_2)\biggr\rvert^2\\
            &\geq 0
        \end{split}
    \end{equation*}

    Nonnegativity of the inner sum of \eqref{eqs: F2_5} and the $1$-boundedness of $f_2$ allow us to deduce that
    \begin{equation}\label{eqs: F2_6}
        \begin{split}
            \lVert \overline{F_2}(m_2,\bh)F_2(m_2^{\prime},\bh) \rVert_{l^2_{m_2,m_2^{\prime},\bh}}^2&\leq\E_{\bz^{\prime}}\E_{x_2,\bz}\overline{\Delta_{\bz^{\prime}}f_2}(z_1,x_2+z_2)\Delta_{\bz^{\prime}}f_2(z_1,z_2)\\
            &=\E_{\bz}\E_{x_2,\bz^{\prime}}\Delta_{(0,x_2)}\Delta_{\bz^{\prime}}f_2(z_1,z_2)\\
            &=\lVert f_2 \rVert_{0\times\Fp,\Fp^2}^4\\
            &\leq \lVert f_2 \rVert_{U^2(0\times\Fp)}^4.
        \end{split}
    \end{equation}
    For the first line of \eqref{eqs: F2_6}, we switch the complex conjugates so that it is easier to recognize the formula as a Gowers norm, and this is valid thanks to the nonnegativity again. For the last line of \eqref{eqs: F2_6}, we use the fact that restricting to subgroup would not decrease Gowers norms.
\end{proof}

Now, we have enough ingredients to prove Theorem \ref{thm: PET}.

\begin{proof}[Proof of Theorem \ref{thm: PET}]
    We simply need to put Propositions \ref{prop: PET}, \ref{prop: Fourier1} and \ref{prop: Fourier2} together to obtain
    \begin{equation*}
        \begin{split}
            &\biggl\lvert\E_{x_1,x_2,y} f_0(x_1,x_2) f_1(x_1+P(y),x_2) f_2(x_1,x_2+Q(y))\biggr\rvert\\
        &\leq\lVert f_0 \rVert_2 \lVert F_1(n_1,\bh) \rVert_{l^2_{n_1,\bh}}^{1/2}\lVert F_2(m_2,\bh) \rVert_{l^2_{m_2,\bh}}^{1/4}\lVert \overline{F_2}(m_2^{\prime},\bh)F_2(m_2^{\dprime},\bh) \rVert_{l^2_{m_2^{\prime},m_2^{\dprime},\bh}}^{1/8}\biggl(\frac{1}{p^6}|Y(\Fp)|\biggr)^{1/16}\\
        &\leq \lVert f_0 \rVert_2 \lVert f_1 \rVert_4 \lVert f_2 \rVert_4^{\frac{1}{2}} \lVert f_2 \rVert_{U^2(0\times \Fp)}^{\frac{1}{4}} \biggl(\frac{1}{p^6}|Y(\Fp)|\biggr)^{\frac{1}{16}}
        \end{split}
    \end{equation*}
\end{proof}

For the purpose of induction, we also need the following ``dual version" of Theorem \ref{thm: PET}.

\begin{thm}\label{thm: PETdual}
    Let $P(t),Q(t)\in \Q(t)$ be rational functions over $\Q$, and $Y(\Fp)$ be the $\Fp$-points of the Roth variety for corners associated to $P(t),Q(t)$. Then, we have the following Gowers norm control
    \begin{multline*}
        \biggl\lvert\E_{x_1,x_2,y} f_0(x_1,x_2) f_1(x_1+P(y),x_2) f_2(x_1,x_2+Q(y))\biggr\rvert\\
        \leq \lVert f_0 \rVert_2 \lVert f_1 \rVert_4^{\frac{1}{2}} \lVert f_1 \rVert_{U^2(\Fp\times 0)}^{\frac{1}{4}}\lVert f_2 \rVert_4 \biggl(\frac{1}{p^6}|Y(\Fp)|\biggr)^{\frac{1}{16}}
    \end{multline*}
    for all $1$-bounded functions $f_0,f_1,f_2:\Fp^2\longrightarrow \C$.
\end{thm}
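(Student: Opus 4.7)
The plan is to deduce Theorem~\ref{thm: PETdual} from Theorem~\ref{thm: PET} via the coordinate-swapping symmetry of $\Fp^2$ that exchanges the two axes. First I would introduce the reflected functions $\tilde{f}_i(u_1,u_2)\coloneqq f_i(u_2,u_1)$ for $i=0,1,2$, and perform the substitution $(x_1,x_2)=(u_2,u_1)$ to rewrite
\begin{equation*}
\E_{x_1,x_2,y} f_0(x_1,x_2) f_1(x_1+P(y),x_2) f_2(x_1,x_2+Q(y)) = \E_{u_1,u_2,y} \tilde{f}_0(u_1,u_2) \tilde{f}_1(u_1,u_2+P(y)) \tilde{f}_2(u_1+Q(y),u_2).
\end{equation*}
The right-hand side is precisely a counting operator for corners generated by the pair $(Q,P)$: now $\tilde{f}_2$ plays the role of the horizontally-shifted function (with shift $Q(y)$) and $\tilde{f}_1$ plays the role of the vertically-shifted function (with shift $P(y)$).

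Next I would apply Theorem~\ref{thm: PET} with the substitutions $(P,Q)\to(Q,P)$ and $(f_0,f_1,f_2)\to(\tilde{f}_0,\tilde{f}_2,\tilde{f}_1)$, yielding the upper bound
\begin{equation*}
\lVert \tilde{f}_0 \rVert_2 \lVert \tilde{f}_2 \rVert_4 \lVert \tilde{f}_1 \rVert_4^{1/2} \lVert \tilde{f}_1 \rVert_{U^2(0\times\Fp)}^{1/4} \Bigl(\tfrac{1}{p^6}|Y_{Q,P}(\Fp)|\Bigr)^{1/16},
\end{equation*}
where $Y_{Q,P}$ denotes the Roth variety associated to the pair $(Q,P)$. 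Since coordinate swap preserves $L^r$-norms, $\lVert \tilde{f}_i \rVert_r = \lVert f_i \rVert_r$ for every $r$; and a short unfolding of the Gowers box-norm definition gives $\lVert \tilde{f}_1 \rVert_{U^2(0\times\Fp)}=\lVert f_1 \rVert_{U^2(\Fp\times 0)}$, since the $0\times\Fp$ difference direction for $\tilde{f}_1$ pulls back to the $\Fp\times 0$ direction for $f_1$.

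The remaining step, which I expect to be the main technical obstacle, is to establish $|Y_{Q,P}(\Fp)|=|Y_{P,Q}(\Fp)|$. I would exhibit an explicit permutation $\sigma$ of $\{1,\ldots,16\}$ that sends the ten defining equations of $Y_{P,Q}$ to those of $Y_{Q,P}$ (possibly up to overall signs, which do not affect vanishing). The key combinatorial observation is that, after quotienting by the pairing $i\sim i+8$, the first four $P$-equations encode the matching $\bigl\{\{1,3\},\{2,4\},\{5,7\},\{6,8\}\bigr\}$ on eight points, while the first four $Q$-equations encode the matching $\bigl\{\{1,2\},\{3,7\},\{4,8\},\{5,6\}\bigr\}$, and these two matchings together form a single 8-cycle $1\text{-}3\text{-}7\text{-}5\text{-}6\text{-}8\text{-}4\text{-}2\text{-}1$ whose rotation by one step swaps the two edge-colorings. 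Lifting this rotation to $\{1,\ldots,16\}$ via $\sigma(i+8)=\sigma(i)+8$ gives the desired $\sigma$; one verifies directly that it also preserves the last two eight-term equations up to sign, since they share the coefficient pattern $(+,-,-,+,-,+,+,-)$ on $(y_9,\ldots,y_{16})$. The resulting bijection $Y_{P,Q}(\Fp)\to Y_{Q,P}(\Fp)$ then completes the proof.
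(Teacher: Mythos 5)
Your proposal is correct and follows essentially the same coordinate-swap route as the paper, which applies Theorem~\ref{thm: PET} to the reflections $g_i(x_1,x_2)=f_i(x_2,x_1)$ in the order $(g_0,g_2,g_1)$ and notes that $\lVert g_1\rVert_{U^2(0\times\Fp)}=\lVert f_1\rVert_{U^2(\Fp\times 0)}$. The only place you go further than the paper is in explicitly verifying $|Y_{Q,P}(\Fp)|=|Y_{P,Q}(\Fp)|$ via the $8$-cycle permutation $\sigma\colon 1\mapsto 3\mapsto 7\mapsto 5\mapsto 6\mapsto 8\mapsto 4\mapsto 2\mapsto 1$ lifted by $\sigma(i+8)=\sigma(i)+8$ (the paper only remarks that the Roth variety is symmetric in $P$ and $Q$), and that verification is correct: it sends equations \eqref{eqs: roth1}--\eqref{eqs: roth8} to the corresponding $P\leftrightarrow Q$-swapped quadruple equations up to sign, and sends \eqref{eqs: roth9}, \eqref{eqs: roth10} to the negations of each other's $P\leftrightarrow Q$-swap.
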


\begin{proof}
    One could repeat the proof of Theorem \ref{thm: PET} and reverse the roles of $f_1,f_2$ in the order of applying Cauchy--Schwarz inequality. Alternatively, this theorem could be deduced from the Theorem \ref{thm: PET} directly. Let $g_0(x_1,x_2)=f_0(x_2,x_1)$, $g_1(x_1,x_2)=f_1(x_2,x_1)$ and $g_2(x_1,x_2)=f_2(x_2,x_1)$, applying Theorem \ref{thm: PET} to the functions $g_0,g_2,g_1$, we have
    \begin{equation*}
        \begin{split}
            &\biggl\lvert\E_{x_1,x_2,y} f_0(x_1,x_2) f_1(x_1+P(y),x_2) f_2(x_1,x_2+Q(y))\biggr\rvert\\
            &=\biggl\lvert\E_{x_1,x_2,y} g_0(x_2,x_1) g_2(x_2+Q(y),x_1) g_1(x_2,x_1+P(y))\biggr\rvert\\
            &\leq \lVert g_0 \rVert_2 \lVert g_2 \rVert_4 \lVert g_1 \rVert_4^{\frac{1}{2}} \lVert g_1 \rVert_{U^2(0\times \Fp)}^{\frac{1}{4}} \biggl(\frac{1}{p^6}|Y(\Fp)|\biggr)^{\frac{1}{16}}
        \end{split}
    \end{equation*}
    Note that the Roth variety is symmetry in $P(t),Q(t)$ and $\lVert g_1 \rVert_{U^2(0\times \Fp)}=\lVert f_1 \rVert_{U^2(\Fp\times 0)}$.
\end{proof}

\section{Dimension estimates}\label{sec: dim}

In this section, we would prove the point-counting estimates for Roth variety in Theorem \ref{thm: dim}. Heuristically, while $P(t),Q(t)$ and constant function $1$ are linearly independent, the defining equations of the Roth variety for corners associated to $P(t),Q(t)$ are essentially independent, hence the Roth variety has expected dimension $6$.

We start with a preliminary proposition regarding certain determinant. For Proposition \ref{prop: Jac}, we adopt the convention that a blank entry in a matrix is understood to be zero.

\begin{prop}\label{prop: Jac}
    Let $P(t),Q(t)\in \Q(t)$ be rational functions over $\Q$, and $Y$ be the Roth variety for corners associated to $P(t),Q(t)$. Let $J$ be the determinant of the first ten columns of the Jacobian matrix of the defining equations of $Y$, that is
    \begin{equation*}
        \begin{vmatrix}
        P'(y_1)& &-P'(y_3)& & & & & &-P'(y_9)& \\
        &P'(y_2) & &-P'(y_4) & & & & & &-P'(y_{10}) \\
        & & & &P'(y_5) & &-P'(y_7) & & & \\
        & & & & &P'(y_6) & &-P'(y_8) & & \\
        Q'(y_1)&-Q'(y_2) & & & & & & &-Q'(y_9)&Q'(y_{10}) \\
        & &Q'(y_3)& & & &-Q'(y_7) & & & \\
        & & &Q'(y_4) & & & &-Q'(y_8) & & \\
        & & & &Q'(y_5) &-Q'(y_6) & & & & \\
        & & & & & & & &P'(y_9)&-P'(y_{10}) \\
        & & & & & & & &Q'(y_9)&-Q'(y_{10}) 
    \end{vmatrix}.
    \end{equation*}
    Then, we have
    \begin{equation*}
    \begin{split}
        &J=\Bigl(P'(y_1)Q'(y_2)Q'(y_3)P'(y_4)Q'(y_5)P'(y_6)P'(y_7)Q'(y_8)\\
        &\quad\quad\quad\quad\quad -Q'(y_1)P'(y_2)P'(y_3)Q'(y_4)P'(y_5)Q'(y_6)Q'(y_7)P'(y_8)\Bigr)\\
        &\quad\quad\times\Bigl(-P'(y_9)Q'(y_{10})+Q'(y_9)P'(y_{10})\Bigr).
    \end{split}        
    \end{equation*}
\end{prop}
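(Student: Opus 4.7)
My plan is to compute $J$ by recognizing that the last two rows are very sparse, applying Laplace expansion to factor out a $2\times 2$ determinant, and then showing that the remaining $8\times 8$ block has only two nonzero terms in its Leibniz expansion.

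First, observe that in the first ten columns (indexed by $y_1,\dots,y_{10}$), the ninth row (coming from equation \eqref{eqs: roth9}) has nonzero entries only at columns 9 and 10, namely $P'(y_9)$ and $-P'(y_{10})$; similarly the tenth row has support only at columns 9 and 10, with entries $Q'(y_9)$ and $-Q'(y_{10})$. Applying the Laplace expansion along rows 9 and 10, with sign $(-1)^{(9+10)+(9+10)}=+1$, I would factor
\begin{equation*}
J \;=\; \det(M_8)\cdot\det\begin{pmatrix} P'(y_9) & -P'(y_{10}) \\ Q'(y_9) & -Q'(y_{10})\end{pmatrix},
\end{equation*}
where $M_8$ is the upper-left $8\times 8$ block (rows 1--8, columns 1--8). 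The $2\times 2$ determinant evaluates immediately to $-P'(y_9)Q'(y_{10})+Q'(y_9)P'(y_{10})$, which is the second factor in the claimed formula.

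The bulk of the work is computing $\det(M_8)$. A quick inspection shows that in this block each column $j$ (for $j=1,\dots,8$) has exactly two nonzero entries: one $\pm P'(y_j)$ in rows 1--4 and one $\pm Q'(y_j)$ in rows 5--8. Dually, each row has exactly two nonzero entries. In the Leibniz expansion $\det(M_8)=\sum_\sigma \mathrm{sgn}(\sigma)\prod_i M_{8}[i,\sigma(i)]$, only those $\sigma$ that pair each row with a column at a nonzero entry survive; these correspond to the perfect matchings of the bipartite graph of nonzero positions. Since each row and column has degree exactly $2$, this graph is a disjoint union of cycles, and I would determine by chasing the constraints from row 5 (which has the two choices $\sigma(5)\in\{1,2\}$) that the graph is a single $8$-cycle, hence admits exactly two perfect matchings. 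Propagating each choice around gives the two permutations
\begin{equation*}
\sigma_1=(3,2,5,8,1,7,4,6),\qquad \sigma_2=(1,4,7,6,2,3,8,5)
\end{equation*}
in one-line notation.

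Finally, I would read off the two monomials: for $\sigma_1$ the product of matrix entries carries four minus signs (and thus sign $+1$), while the permutation has cycle structure $(1\,3\,5)(4\,8\,6\,7)$, contributing $\mathrm{sgn}(\sigma_1)=-1$; for $\sigma_2$ the entries again give four minus signs, and the cycle structure $(2\,4\,6\,3\,7\,8\,5)$ is a $7$-cycle with $\mathrm{sgn}(\sigma_2)=+1$. Multiplying and rearranging factors yields
\begin{equation*}
\det(M_8)=P'(y_1)Q'(y_2)Q'(y_3)P'(y_4)Q'(y_5)P'(y_6)P'(y_7)Q'(y_8)-Q'(y_1)P'(y_2)P'(y_3)Q'(y_4)P'(y_5)Q'(y_6)Q'(y_7)P'(y_8),
\end{equation*}
and combining with the $2\times 2$ factor gives the stated identity. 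The only delicate step is the bookkeeping of signs in the Leibniz expansion, but since each monomial has exactly four minus signs in the matrix entries, they contribute $+1$, leaving only the permutation sign to track.
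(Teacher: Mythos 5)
Your proposal is correct, and the final answer matches the paper's. Your first step, factoring off the lower-right $2\times 2$ block (the matrix is block upper-triangular with zero lower-left block, so $J=\det(M_8)\cdot\det(M_2)$), is exactly what the paper does. Where you diverge is in computing $\det(M_8)$: you observe that each row and column of $M_8$ has exactly two nonzero entries, so the bipartite graph of nonzero positions has all degrees $2$ and decomposes into cycles; you verify it is a single cycle, so there are precisely two perfect matchings, and hence exactly two surviving permutations in the Leibniz expansion. You then compute the sign of each, correctly finding $\sigma_1=(1\,3\,5)(4\,8\,6\,7)$ with $\mathrm{sgn}=-1$ and each entry-product carrying $(-1)^4=+1$, and $\sigma_2=(2\,4\,6\,3\,7\,8\,5)$ with $\mathrm{sgn}=+1$ and entry-sign $+1$, giving the two monomials with the correct signs. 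The paper instead computes $\det(M_8)$ by repeated cofactor expansion: expanding on the first column to get two $7\times 7$ minors and then reducing each by hand, one minor at a time. The paper even remarks that the combinatorial route you chose (noting that each $7\times 7$ minor has a unique nonvanishing permutation) is available but opts for explicit cofactor expansions for clarity. Your approach is shorter and more conceptual, since it pinpoints \emph{why} only two terms survive, while the paper's is more mechanical but leaves nothing implicit. Both are valid; the only caveat in yours is that the claim ``the bipartite graph is a single cycle'' needs the brief verification you gesture at (chasing the adjacencies), and the cycle-type and sign bookkeeping for $\sigma_1,\sigma_2$ must be done carefully, as you did.
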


\begin{proof}
    Note that the first eight entries of the ninth and tenth rows of the Jacobian matrix are entirely zeroes, hence we can expand $J$ as
    \begin{equation}\label{eqs: Jac1}
        \begin{split}
            &\begin{vmatrix}
                P'(y_9)&-P'(y_{10}) \\
                Q'(y_9)&-Q'(y_{10}) 
            \end{vmatrix}
            \begin{vmatrix}
                P'(y_1)& &-P'(y_3)& & & & & \\
        &P'(y_2) & &-P'(y_4) & & & &  \\
        & & & &P'(y_5) & &-P'(y_7) & \\
        & & & & &P'(y_6) & &-P'(y_8) \\
        Q'(y_1)&-Q'(y_2) & & & & & & \\
        & &Q'(y_3)& & & &-Q'(y_7) & \\
        & & &Q'(y_4) & & & &-Q'(y_8) \\
        & & & &Q'(y_5) &-Q'(y_6) & & \\
            \end{vmatrix}\\
            &=\Bigl(-P'(y_9)Q'(y_{10})+Q'(y_9)P'(y_{10})\Bigr)J_1
        \end{split}
    \end{equation}
    where $J_1$ is defined to be the determinant of the $8\times 8$ matrix in the first line of \eqref{eqs: Jac1}.

    Next, expand $J_1$ with respect to the first column to obtain
    \begin{equation*}
        \begin{split}
            J_1&=P'(y_1)
            \begin{vmatrix}
        P'(y_2) & &-P'(y_4) & & & &  \\
         & & &P'(y_5) & &-P'(y_7) & \\
         & & & &P'(y_6) & &-P'(y_8) \\
        -Q'(y_2) & & & & & & \\
         &Q'(y_3)& & & &-Q'(y_7) & \\
         & &Q'(y_4) & & & &-Q'(y_8) \\
         & & &Q'(y_5) &-Q'(y_6) & & \\
            \end{vmatrix}\\
            &\quad\quad+Q'(y_1)
            \begin{vmatrix}
                 &-P'(y_3)& & & & & \\
        P'(y_2) & &-P'(y_4) & & & &  \\
         & & &P'(y_5) & &-P'(y_7) & \\
         & & & &P'(y_6) & &-P'(y_8) \\
         &Q'(y_3)& & & &-Q'(y_7) & \\
         & &Q'(y_4) & & & &-Q'(y_8) \\
         & & &Q'(y_5) &-Q'(y_6) & & \\
            \end{vmatrix}\\
            &\eqqcolon A+B
        \end{split}
    \end{equation*}
    where $A,B$ are defined to be the two terms in the cofactor expansion of $J_1$.

    The $7\times 7$ matrices in $A,B$ are fairly sparse. For each of them, we could argue combinatorially that there is only one permutation of $\{1,2,...,7\}$ such that the product of entries is nonzero, and evaluate the determinant, but perhaps it would be clearer if we record the direct computations via repeated use of cofactor expansions of determinants as an alternative approach here.

    We start with the quantity $A$. Expanding the determinant with respect to the fourth row, the expression simplifies to
    \begin{equation*}
    P'(y_1)Q'(y_2)
        \begin{vmatrix}
          &-P'(y_4) & & & &  \\
          & &P'(y_5) & &-P'(y_7) & \\
          & & &P'(y_6) & &-P'(y_8) \\
         Q'(y_3)& & & &-Q'(y_7) & \\
          &Q'(y_4) & & & &-Q'(y_8) \\
          & &Q'(y_5) &-Q'(y_6) & & 
        \end{vmatrix}.
    \end{equation*}

    Expanding the determinant with respect to the first column, the expression simplifies to
    \begin{equation*}
    -P'(y_1)Q'(y_2)Q'(y_3)
        \begin{vmatrix}
          -P'(y_4) & & & &  \\
           &P'(y_5) & &-P'(y_7) & \\
           & &P'(y_6) & &-P'(y_8) \\
          Q'(y_4) & & & &-Q'(y_8) \\
           &Q'(y_5) &-Q'(y_6) & & 
        \end{vmatrix}.
    \end{equation*}

    Expanding the determinant with respect to the first row, the expression simplifies to
    \begin{equation*}
    P'(y_1)Q'(y_2)Q'(y_3)P'(y_4)
        \begin{vmatrix}
           P'(y_5) & &-P'(y_7) & \\
            &P'(y_6) & &-P'(y_8) \\
           & & &-Q'(y_8) \\
           Q'(y_5) &-Q'(y_6) & & 
        \end{vmatrix}.
    \end{equation*}

    Expanding the determinant with respect to the third column, the expression simplifies to
    \begin{equation*}
    -P'(y_1)Q'(y_2)Q'(y_3)P'(y_4)P'(y_7)
        \begin{vmatrix}
            &P'(y_6)  &-P'(y_8) \\
           &  &-Q'(y_8) \\
           Q'(y_5) &-Q'(y_6)  & 
        \end{vmatrix}.
    \end{equation*}

    Expanding the determinant with respect to the first column, the expression simplifies to
    \begin{equation*}
    -P'(y_1)Q'(y_2)Q'(y_3)P'(y_4)Q'(y_5)P'(y_7)
        \begin{vmatrix}
            P'(y_6)  &-P'(y_8) \\
             &-Q'(y_8) 
        \end{vmatrix}.
    \end{equation*}

    Expanding the $2\times 2$ determinant, the expression simplifies to
    \begin{equation*}    P'(y_1)Q'(y_2)Q'(y_3)P'(y_4)Q'(y_5)P'(y_6)P'(y_7)Q'(y_8)
    \end{equation*}

    Next, we repeat the same process for $B$. Expanding the determinant with respect to the first column, the expression simplifies to
    \begin{equation*}
        -Q'(y_1)P'(y_2)
        \begin{vmatrix}
                 -P'(y_3)& & & & & \\
          & &P'(y_5) & &-P'(y_7) & \\
          & & &P'(y_6) & &-P'(y_8) \\
         Q'(y_3)& & & &-Q'(y_7) & \\
          &Q'(y_4) & & & &-Q'(y_8) \\
          & &Q'(y_5) &-Q'(y_6) & & \\
            \end{vmatrix}.
    \end{equation*}

    Expanding the determinant with respect to the first row, the expression simplifies to
    \begin{equation*}
        Q'(y_1)P'(y_2)P'(y_3)
        \begin{vmatrix}
           &P'(y_5) & &-P'(y_7) & \\
           & &P'(y_6) & &-P'(y_8) \\
          & & &-Q'(y_7) & \\
          Q'(y_4) & & & &-Q'(y_8) \\
           &Q'(y_5) &-Q'(y_6) & & \\
            \end{vmatrix}.
    \end{equation*}

    Expanding the determinant with respect to the first column, the expression simplifies to
    \begin{equation*}
        -Q'(y_1)P'(y_2)P'(y_3)Q'(y_4)
        \begin{vmatrix}
           P'(y_5) & &-P'(y_7) & \\
            &P'(y_6) & &-P'(y_8) \\
           & &-Q'(y_7) & \\
           Q'(y_5) &-Q'(y_6) & & \\
            \end{vmatrix}.
    \end{equation*}

    Expanding the determinant with respect to the last column, the expression simplifies to
    \begin{equation*}
        Q'(y_1)P'(y_2)P'(y_3)Q'(y_4)P'(y_8)
        \begin{vmatrix}
           P'(y_5) & &-P'(y_7)  \\
           & &-Q'(y_7)  \\
           Q'(y_5) &-Q'(y_6) &  
            \end{vmatrix}.
    \end{equation*}

    Expanding the determinant with respect to the second row, the expression simplifies to
    \begin{equation*}
        Q'(y_1)P'(y_2)P'(y_3)Q'(y_4)Q'(y_7)P'(y_8)
        \begin{vmatrix}
           P'(y_5) &   \\
           Q'(y_5) &-Q'(y_6)   
            \end{vmatrix}.
    \end{equation*}

    Expanding the $2\times 2$ determinant, the expression simplifies to
    \begin{equation*}
        -Q'(y_1)P'(y_2)P'(y_3)Q'(y_4)P'(y_5)Q'(y_6)Q'(y_7)P'(y_8),
    \end{equation*}
    this completes the proof.
\end{proof}

The main theorem in this section is Theorem \ref{thm: dim}. The proof of Theorem \ref{thm: dim} is quite technical and long, but the main idea is actually fairly intuitive and direct. The strategy is similar to the one that Hong and the author \cite{hong-lim} used for the Roth variety for one dimensional progressions. We would decompose the Roth variety into smooth part and singular part. The smooth part could be dealt with complex analytic technique, while the singular part has extra constraints for free. Furthermore, we would repeat this process several times, hence we would have ``smooth part of the singular part" and ``singular part of the singular part" and so on. After enough decompositions, there would be sufficient constraints that only depend on a few variables, and this is very helpful for point-counting estimates.

\begin{thm}\label{thm: dim}
    Let $P(t),Q(t)\in \Q(t)$ be rational functions over $\Q$ such that $P(t),Q(t)$ and constant function $1$ are linearly independent over $\Q$, and $Y(\Fp)$ be the $\Fp$-points of the Roth variety for corners associated to $P(t),Q(t)$. Then, we have the following point-counting estimate
    \begin{equation*}
        |Y(\Fp)|\ll p^6.
    \end{equation*}
\end{thm}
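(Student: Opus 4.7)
The plan is to establish the dimension bound $\dim Y \leq 6$ (as a variety over $\overline{\Q}$, or equivalently over $\C$), after which the cardinality estimate $|Y(\Fp)| \ll p^6$ follows for all sufficiently large $p$ by reduction modulo $p$ together with the standard bound $|V(\Fp)| \ll_V p^{\dim V}$ applied to each irreducible component. Since $P,Q$ have $\Q$-coefficients, the defining equations make sense as an integral model, so the generic-fibre dimension bound transfers to $\Fp$ for all but finitely many $p$.

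To bound the dimension, I would follow the iterated smooth/singular decomposition developed by Hong and the author in \cite{hong-lim}. The entry point is Proposition \ref{prop: Jac}, which computes the $10\times 10$ minor $J$ of the Jacobian corresponding to differentiation in $y_1,\dots,y_{10}$ and exhibits the factorization $J = J_1 \cdot J_2$ with $J_2 = Q'(y_9)P'(y_{10}) - P'(y_9)Q'(y_{10})$ and $J_1$ a polynomial in $P'(y_i), Q'(y_i)$ for $i=1,\dots,8$. On the open stratum $\{J\neq 0\}\cap Y$, the analytic implicit function theorem over $\C$ realizes $Y$ locally as the graph of a holomorphic map from $(y_{11},\dots,y_{16})$-space, so this stratum has dimension exactly $16-10=6$.

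It remains to treat the closed stratum $\{J=0\}\cap Y = (\{J_1=0\}\cap Y) \cup (\{J_2=0\}\cap Y)$. The linear independence of $P, Q, 1$ over $\Q$ enters here: it forces $P'/Q'$ to be non-constant (otherwise $P = cQ + d$), so $J_2$ is not identically zero on $\{y_9,y_{10}\}$-space; an analogous monomial-matching argument handles $J_1$. Hence each sub-stratum comes with one genuinely new defining equation for free. I would then iterate: on each singular sub-stratum, choose a new $10\times 10$ minor of the now-enlarged Jacobian system, factor it in the style of Proposition \ref{prop: Jac}, apply the implicit function theorem to its nonvanishing open sublocus (whose expected dimension has dropped to $5$), and recursively decompose what remains. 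After finitely many iterations all surviving sub-strata carry so many constraints, involving so few variables, that the dimension bound $\leq 6$ becomes evident by inspection.

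The main obstacle, and the source of the length of the proof, is the combinatorial bookkeeping of this recursion: at every stage one has to choose an appropriate Jacobian minor, carry out a computation analogous to Proposition \ref{prop: Jac} to factor it, and use the linear independence of $P, Q, 1$ to verify that the chosen minor does not vanish identically on the current stratum. One must also argue that each Wronskian-like factor produced is genuinely a new, non-redundant constraint. A secondary but easy issue is the removal of the poles of $P$ and $Q$; these lie in a codimension $\geq 1$ subvariety contributing at most $\ll p^{5}$ and therefore do not affect the final bound.
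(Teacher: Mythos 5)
Your outline captures the skeletal framework --- iterated smooth/singular stratification via Jacobian minors, the implicit function theorem on the smooth strata, and the transfer to $\Fp$ via a $\mathbb{Z}$-model and Lang--Weil --- and correctly isolates the easy nonvanishing claims: $J_1$ and $J_2$ are nonzero because $P'/Q'$ is not constant. But you have located the point where linear independence of $P,Q,1$ enters at the wrong stratum. The genuinely hard nonvanishing is not of $J_1$ or $J_2$ but of the \emph{second-level} minor on the stratum where the $R$-products factor of $J_1$ vanishes, i.e.\ $R(y_1)R(y_4)R(y_6)R(y_7) = R(y_2)R(y_3)R(y_5)R(y_8)$ with $R = P'/Q'$. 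Step $3$ of the paper's proof computes a $3\times 3$ Wronskian-type determinant $D(y_1,y_4,y_6)$ whose rows are $P'$, $Q'$ and $S \coloneqq R'/R = P''/P' - Q''/Q'$, and proves $D$ is a nonzero rational function by a pole-matching argument: the logarithmic derivative $S$ acquires simple poles exactly where the zeros or poles of $P'$ and $Q'$ differ, while $P'$ and $Q'$ (being derivatives) never have simple poles, so the poles of $S$ cannot cancel in the cofactor expansion of $D$ unless $P' = cQ'$, contradicting linear independence. This is where the hypothesis really bites, and "an analogous monomial-matching argument" does not produce it; nothing in your proposal contains this idea.

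Second, the recursion you propose --- choosing new $10\times 10$ minors of the enlarged $16$-variable Jacobian system, with the expected dimension dropping to $5$ --- is not what the proof does and is not verified to terminate. The key structural move is that integral linear combinations of the ten Roth equations yield equations \eqref{eqs: dim2}, \eqref{eqs: dim3} in $y_1,\dots,y_8$ alone, so the problematic stratum $Z$ is controlled through an auxiliary variety $Z'$ defined by just three equations in $\mathbb{A}^8$. One proves $|Z'(\Fp)| \ll p^5$ by a single further smooth/singular split (this is where $D$ appears, together with the $X_{a,b}$ fibres of \eqref{eqs: dim5}--\eqref{eqs: dim6}), and then lifts to $|Z(\Fp)| \ll p^6$ by direct triangular counting: fix $(y_1,\dots,y_8)\in Z'(\Fp)$ and a free $y_9$, and the Roth equations pin down $y_{10},\dots,y_{16}$ up to $O(1)$. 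The remaining degenerate strata $Y_i$ (with $Q'(y_i)=0$) and $Y_{9,10}$ are likewise dispatched by triangular counting, not by another Jacobian factorization. In particular, the singular stratum $Z$ does \emph{not} have expected dimension $5$ --- the proof establishes $|Z(\Fp)| \ll p^6$, and an unstructured iteration of your type would not "become evident by inspection" without the projection to $Z'$ and the logarithmic-derivative nonvanishing.
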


\begin{proof}
We would break down the proof into eight steps. Here is a roadmap of the proof. The main steps are Steps $1,2,3,4,6$.
\begin{itemize}
    \item Step $1$: Bounding the generic part of the Roth variety via complex analytic techniques. The readers are recommended to glance through this step and skip the technical details.
    \item Step $2$: Decomposing the Roth variety further to divide and conquer. This step is mostly the definitions and constructions.
    \item Steps $3,4$: Showing that certain constraint is nontrivial. These two steps are the key calculations.
    \item Step $6$: Bounding the special part of the Roth variety. This step is quite representative and the same method would also be used in Steps $5,7,8$.
    \item Steps $5,7,8$: Dealing with the degenerate cases. These three steps can be skipped on a first reading.
\end{itemize}
While reading the proof, the readers are encouraged to take $P,Q$ as polynomials, in fact, taking $P(t)=t,Q(t)=t^2$ would be a fairly illustrative example.

    We would start with some preliminary decomposition of $Y$. Let $J(y_1,y_2,...,y_{10})$ be the determinant of the first ten columns of the Jacobian matrix of the defining equations of $Y$, then by Proposition \ref{prop: Jac}, we have
    \begin{equation*}
        \begin{split}
        &J(y_1,y_2,y_3,y_4,y_5,y_6,y_7,y_8,y_9,y_{10})\\
        &=\Bigl(P'(y_1)Q'(y_2)Q'(y_3)P'(y_4)Q'(y_5)P'(y_6)P'(y_7)Q'(y_8)\\
        &\quad\quad\quad\quad\quad -Q'(y_1)P'(y_2)P'(y_3)Q'(y_4)P'(y_5)Q'(y_6)Q'(y_7)P'(y_8)\Bigr)\\
        &\quad\quad\times\Bigl(-P'(y_9)Q'(y_{10})+Q'(y_9)P'(y_{10})\Bigr).
    \end{split}
    \end{equation*}
    
    Define
    \begin{equation*}
        Y_{\mathrm{gen}}=Y\cap \{(y_1,y_2,...,y_{16})\in \mathbb{A}^{16}:J(y_1,y_2,y_3,y_4,y_5,y_6,y_7,y_8,y_9,y_{10})\neq 0\}
    \end{equation*}
    and
    \begin{equation*}
        Y_{\mathrm{sp}}=Y\cap \{(y_1,y_2,...,y_{16})\in \mathbb{A}^{16}:J(y_1,y_2,y_3,y_4,y_5,y_6,y_7,y_8,y_9,y_{10})=0\}
    \end{equation*}
    where $\mathbb{A}$ is the affine space.

    It makes sense to define $Y_{\mathrm{gen}}$ and $Y_{\mathrm{sp}}$ over any fields, hence the specific field is not mentioned in the definitions, and it would be convenient to work over various fields. Note that we have $Y(\mathbb{F}_p)=Y_{\mathrm{gen}}(\mathbb{F}_p)\cup Y_{\mathrm{sp}}(\mathbb{F}_p)$. Informally, the variety $Y_{\mathrm{gen}}$ is a generic part of $Y$ which lies in the smooth component of $Y$, and $Y_{\mathrm{sp}}$ is a special subvariety of $Y$ cut out by $J$.\\

    \underline{\textbf{Step $1$: Bounding $|Y_{\mathrm{gen}}(\mathbb{F}_p)|$ via complex analytic techniques}}

    In this step, we would bound $|Y_{\mathrm{gen}}(\mathbb{F}_p)|$ via complex analytic techniques. This step follows the same complex analytic method that Hong and the author \cite[Theorem 4.1]{hong-lim} use to bound the size of the $\Fp$-points on the Roth variety for one dimensional progressions. The main point of this step is saying various notions of dimensions are the same, and the readers are recommended to glance through this step and skip the technical details.

    The associated complex analytic space of $Y_{\mathrm{gen}}(\mathbb{C})$ is a $6$-dimensional complex manifold since the Jacobian matrices have full rank and we can apply the implicit function theorem. By Noether normalization, there exists a finite surjective map $Y_{\mathrm{gen}}(\mathbb{C})\longrightarrow \mathbb{A}^d(\mathbb{C})$ between varieties, where $d$ is the dimension of $Y_{\mathrm{gen}}(\mathbb{C})$ as a variety and $\mathbb{A}^d$ is the $d$-dimensional affine space. Regard the map $Y_{\mathrm{gen}}(\mathbb{C})\longrightarrow \mathbb{A}^d(\mathbb{C})$ as a finite holomorphic map between complex manifolds. By the first theorem in \cite[Chapter 5, Section 4.1]{MR0755331}, we deduce that the dimension of $Y_{\mathrm{gen}}(\mathbb{C})$ as a variety is $3$. (We compare the dimension of a variety over $\mathbb{C}$ and the dimension of its associated complex analytic space via \textit{ad hoc} argument here, for general dimension theory of complex analytic spaces, see \cite[Appendix B]{MR0463157} and \cite[Chapter 5, Section 1]{MR0755331}.)
    
    Consider an affine $\mathbb{Z}$-model of $Y_{\mathrm{gen}}(\mathbb{C})$ by clearing out the denominators in the defining equations of the Roth variety and adding one auxiliary variable $z$ and one equation to require that the denominators are nonzero. To be more specific, write $P(t)=a(t)/b(t)$ and $Q(t)=c(t)/d(t)$ with $a(t),b(t),c(t),d(t)\in \mathbb{Z}[t]$, let $I\subset \mathbb{Z}[y_1,y_2,...,y_{16},z]$ be the ideal generated by the following polynomials:
    \begin{gather*}
        \Bigl(b(y_1)b(y_{3})b(y_{9})b(y_{11})\Bigr)\Bigl(P(y_1)-P(y_3)-P(y_9)+P(y_{11})\Bigr)\\
        \Bigl(b(y_2)b(y_{4})b(y_{10})b(y_{12})\Bigr)\Bigl(P(y_2)-P(y_4)-P(y_{10})+P(y_{12})\Bigr)\\
        \Bigl(b(y_5)b(y_{7})b(y_{13})b(y_{15})\Bigr)\Bigl(P(y_5)-P(y_7)-P(y_{13})+P(y_{15})\Bigr)\\
        \Bigl(b(y_6)b(y_{8})b(y_{14})b(y_{16})\Bigr)\Bigl(P(y_6)-P(y_8)-P(y_{14})+P(y_{16})\Bigr)\\
        \Bigl(d(y_1)d(y_{2})d(y_{9})d(y_{10})\Bigr)\Bigl(Q(y_1)-Q(y_2)-Q(y_{9})+Q(y_{10})\Bigr)\\
        \Bigl(d(y_3)d(y_{7})d(y_{11})d(y_{15})\Bigr)\Bigl(Q(y_3)-Q(y_7)-Q(y_{11})+Q(y_{15})\Bigr)\\
        \Bigl(d(y_4)d(y_{8})d(y_{12})d(y_{16})\Bigr)\Bigl(Q(y_4)-Q(y_8)-Q(y_{12})+Q(y_{16})\Bigr)\\
        \Bigl(d(y_5)d(y_{6})d(y_{13})d(y_{14})\Bigr)\Bigl(Q(y_5)-Q(y_6)-Q(y_{13})+Q(y_{14})\Bigr)\\
        \Bigl(\prod_{i=9}^{16}b(y_i)\Bigr)\Bigl(P(y_{9})-P(y_{10})-P(y_{11})+P(y_{12})-P(y_{13})+P(y_{14})+P(y_{15})-P(y_{16})\Bigr)\\
        \Bigl(\prod_{i=9}^{16}d(y_i)\Bigr)\Bigl(Q(y_{9})-Q(y_{10})-Q(y_{11})+Q(y_{12})-Q(y_{13})+Q(y_{14})+Q(y_{15})-Q(y_{16})\Bigr)\\
        z\Bigl(\prod_{i=1}^{16}b(y_i)d(y_i)\Bigr)\biggl[\Bigl(\prod_{i=1}^{10}b(y_i)^2d(y_i)^2\Bigr)J(y_1,y_2,...y_{10})\biggr]-1
    \end{gather*}
    The first tenth generators of the ideal $I$ are the denominator free version of the defining equations of the Roth variety. Including the last generator of the ideal $I$ is a localization trick in algebraic geometry; it ensures $J(y_1,y_2,...,y_{16})$ and the denominators of $P$ and $Q$ are nonzero, but it does not impose any unwanted constraints since the auxiliary variable $z$ is uniquely determined by $y_1,y_2,...,y_{16}$.  
    
    Define affine $\mathbb{Z}$-model $Y_{\mathrm{gen}}=\mathrm{Spec}\; \mathbb{Z}[y_1,y_2,...,y_{16},z]/I$. Note that the $\mathbb{C}$-points of $Y_{\mathrm{gen}}\times_\mathbb{Z} \mathbb{C}$ is isomorphic to $Y_{\mathrm{gen}}(\mathbb{C})$, hence we have $\dim_{\mathrm{Sch}}Y_{\mathrm{gen}}\times_\mathbb{Z} \mathbb{C}=\dim_{\mathrm{Var}}Y_{\mathrm{gen}}(\mathbb{C})=6$. By \cite[Chapter 2, Exercise 3.20.(f)]{MR0463157}, we have the invariance of dimension under base change, i.e., $\dim_{\mathrm{Sch}}Y_{\mathrm{gen}}\times_\mathbb{Z} \mathbb{Q}=\dim_{\mathrm{Sch}}Y_{\mathrm{gen}}\times_\mathbb{Z} \mathbb{C}=6$. (Note that the integral assumption in \cite[Chapter 2, Exercise 3.20.(f)]{MR0463157} can be removed if one does not require the scheme after base change to be equidimensional.)
    
    Note that the structure map $Y_{\mathrm{gen}}=\mathrm{Spec}\; \mathbb{Z}[y_1,y_2,...,y_{16},z]/I \longrightarrow \mathrm{Spec}\;\mathbb{Z}$ is of finite type and the base $\mathrm{Spec}\;\mathbb{Z}$ is irreducible, by \cite[\href{https://stacks.math.columbia.edu/tag/05F6}{Section 05F6, Lemma 37.30.1}]{stacks-project}, we deduce that the dimension of the generic fiber is the same as mod $p$ fibers for all but finitely many $p$, that is, $\dim_{\mathrm{Sch}}Y_{\mathrm{gen}}\times_\mathbb{Z} \mathbb{Q}=\dim_{\mathrm{Sch}}Y_{\mathrm{gen}}\times_\mathbb{Z} \mathbb{F}_p=6$ for all but finitely many $p$. Once again, by \cite[Chapter 2, Exercise 3.20.(f)]{MR0463157}, we have $\dim_{\mathrm{Sch}}Y_{\mathrm{gen}}\times_\mathbb{Z} \mathbb{F}_p=\dim_{\mathrm{Sch}}Y_{\mathrm{gen}}\times_\mathbb{Z} \overline{\mathbb{F}}_p$. Hence, we have $\dim_{\mathrm{Var}}Y_{\mathrm{gen}}(\overline{\mathbb{F}}_p)=6$ for all but finitely many $p$. Now, the Lang--Weil bound \cite{MR0065218} (also see \cite[Lemma 1]{Tao})  implies that $|Y_{\mathrm{gen}}(\mathbb{F}_p)|\ll p^6$. (Note that one does not need the irreducibility of the variety.)
    
    (\emph{As a side note, we pass to complex numbers since it is convenient to work with algebraic geometry over characteristic zero and rational functions over complex numbers. That being said, the machinery of algebraic geometry is robust enough for one to only work over finite fields. Therefore, the approach we chose is just one possible way to proceed.})\\

    \underline{\textbf{Step $2$: Decomposing $Y_{\mathrm{sp}}$}}

    This step is mostly the definitions and constructions. In this step, we would decompose $Y_{\mathrm{sp}}$ and isolate the main difficulty.

    Let 
    \begin{equation*}
        R(y)=\frac{P'(y)}{Q'(y)}
    \end{equation*}
    and rewrite $J(y_1,y_2,...,y_{10})$ as
    \begin{equation*}
        \begin{split}
        &\Bigl(-P'(y_9)Q'(y_{10})+Q'(y_9)P'(y_{10})\Bigr)\\
        &\times\Bigl(Q'(y_1)Q'(y_2)Q'(y_3)Q'(y_4)Q'(y_5)Q'(y_6)Q'(y_7)Q'(y_8)\Bigr)\\
        &\times\Bigl(R(y_1)R(y_4)R(y_6)R(y_7)-R(y_2)R(y_3)R(y_5)R(y_8)\Bigr).
    \end{split}
    \end{equation*}

    Define
    \begin{gather*}
        Z=Y\cap\{(y_1,y_2,...,y_{16})\in \mathbb{A}^{16}:R(y_1)R(y_4)R(y_6)R(y_7)-R(y_2)R(y_3)R(y_5)R(y_8)=0\}\\
        Y_{i}=Y\cap\{(y_1,y_2,...,y_{16})\in \mathbb{A}^{16}:Q'(y_i)=0\}\quad\text{for}\; 1\leq i \leq 8\\
        Y_{9,10}=Y\cap\{(y_1,y_2,...,y_{16})\in \mathbb{A}^{16}:-P'(y_9)Q'(y_{10})+Q'(y_9)P'(y_{10})=0\}.
    \end{gather*}
    Note that $Y_{\mathrm{sp}}(\Fp)=Z(\Fp)\cup Y_1(\Fp)\cup Y_2(\Fp)\cup...\cup Y_8(\Fp)\cup Y_{9,10}(\Fp)$. Informally, the varieties $Y_1,Y_2,...,Y_8,Y_{9,10}$ are of low dimension due to the strong constraints $Q'(y_i)=0$ for $1\leq i\leq 8$ and $-P'(y_9)Q'(y_{10})+Q'(y_9)P'(y_{10})=0$, and they can be handled more easily later at Steps $7,8$. The main difficulty lies in the variety $Z$. 
    
    To bound the size of $Z(\Fp)$, we would define an auxiliary variety $Z'$. Let $Z'$ be the variety cut out by the following three equations in eight variables $y_1,y_2,...,y_8$.
    \begin{gather}
        P(y_{1})-P(y_{2})-P(y_{3})+P(y_{4})-P(y_{5})+P(y_{6})+P(y_{7})-P(y_{8})=0\label{eqs: dim2}\\
        Q(y_{1})-Q(y_{2})-Q(y_{3})+Q(y_{4})-Q(y_{5})+Q(y_{6})+Q(y_{7})-Q(y_{8})=0\label{eqs: dim3}\\
        R(y_1)R(y_4)R(y_6)R(y_7)-R(y_2)R(y_3)R(y_5)R(y_8)=0\label{eqs: dim4}
    \end{gather}
    Note that $Z'(\Fp)$ lives in $\Fp^8$. The first two defining equations of $Z'$ do not come out of nowhere; the points on the Roth variety $Y$ actually satisfy those two equations. To see this, we manipulate the defining equations of the Roth variety algebraically; consider adding the equations $\eqref{eqs: roth1}-\eqref{eqs: roth2}-\eqref{eqs: roth3}+\eqref{eqs: roth4}+\eqref{eqs: roth9}$ and $\eqref{eqs: roth5}-\eqref{eqs: roth6}+\eqref{eqs: roth7}-\eqref{eqs: roth8}+\eqref{eqs: roth10}$. Therefore, the varieties $Z$ and $Z'$ are closely related: for any $(y_1,y_2,...,y_{16})\in Z(\Fp)$, we have $(y_1,y_2,...,y_8)\in Z'(\Fp)$.

    The main goal would be showing $|Z'(\Fp)|\ll p^5$. To do that, once again we decompose $Z'$ into smooth part and singular parts. Let $J_{Z'}(y_1,y_4,y_6)$ be the determinant of the first, fourth and sixth columns of the Jacobian matrix of the defining equations of $Z'$, that is
    \begin{equation*}
        J_{Z'}(y_1,y_4,y_6)=
        \begin{vmatrix}
            P'(y_1)& P'(y_4)& P'(y_6)\\
            Q'(y_1)& Q'(y_4)& Q'(y_6)\\
            R'(y_1)R(y_4)R(y_6)R(y_7)& R(y_1)R'(y_4)R(y_6)R(y_7)& R(y_1)R(y_4)R'(y_6)R(y_7)\\
        \end{vmatrix}.
    \end{equation*}
    Let $S(y)$ be the logarithmic derivative of $R(y)$, i.e.,
    \begin{equation*}
        S(y)=\frac{R'(y)}{R(y)}
    \end{equation*}
    and
    \begin{equation*}
    D(y_1,y_4,y_6)=
        \begin{vmatrix}
            P'(y_1)& P'(y_4)& P'(y_6)\\
            Q'(y_1)& Q'(y_4)& Q'(y_6)\\
            S(y_1)& S(y_4)& S(y_6)\\
        \end{vmatrix}
    \end{equation*}
    so that we could rewrite
    \begin{equation*}
        J_{Z'}(y_1,y_4,y_6)=R(y_1)R(y_4)R(y_6)R(y_7)D(y_1,y_4,y_6).
    \end{equation*}

    Define
    \begin{gather*}
        Z'_{\mathrm{gen}}=Z'\cap \{(y_1,y_2,...,y_8)\in \mathbb{A}^8:J_{Z'}(y_1,y_4,y_6)\neq 0\}\\
        Z'_{i}=Z'\cap \{(y_1,y_2,...,y_8)\in \mathbb{A}^8:R(y_i)=0\}\quad\text{for}\;i=1,4,6,7\\
        Z'_{\mathrm{sp}}=Z'\cap \{(y_1,y_2,...,y_8)\in \mathbb{A}^8:D(y_1,y_4,y_6)= 0\}.
    \end{gather*}
    Note that $Z'(\Fp)=Z'_{\mathrm{gen}}(\Fp)\cup Z'_{1}(\Fp)\cup Z'_{4}(\Fp)\cup Z'_{6}(\Fp)\cup Z'_{7}(\Fp)\cup Z'_{\mathrm{sp}}(\Fp)$. The variety $Z'_{\mathrm{gen}}$ is a generic part of $Z'$ that lies in the smooth component of $Z'$. The associated complex analytic space of $Z'_{\mathrm{gen}}(\mathbb{C})$ is a $5$-dimensional complex manifold since the Jacobian matrices have full rank and we can apply the implicit function theorem. By similar arguments to Step $1$, we could compare various dimensions and conclude that $|Z'_{\mathrm{gen}}(\Fp)|\ll p^5$. The varieties $Z'_1,Z'_4,Z'_6,Z'_7$ are of low dimension due to the strong constraints $R(y_i)=0$ for $i=1,4,6,7$, and they can be handled more easily later at Step $5$. We would proceed to bound $|Z'_{\mathrm{sp}}(\Fp)|$ in Steps $3,4$.\\

    \underline{\textbf{Step 3: Showing that $D(y_1,y_4,y_6)$ is nonzero}}
    
    In this step, we would show that $D(y_1,y_4,y_6)$ is a nonzero rational function, hence $D(y_1,y_4,y_6)=0$ is a nontrivial constraint for $Z'_{\mathrm{sp}}$. This step crucially relies on the assumption that $P,Q$ and constant function $1$ are linearly independent.
    
    Expanding $D(y_1,y_4,y_6)$ with respect to the first column, we have
    \begin{equation}\label{eqs: dim1}
        \begin{split}
            &D(y_1,y_4,y_6)\\
            &=P'(y_1)
            \begin{vmatrix}
                Q'(y_4)&Q'(y_6)\\
                S(y_4)&S(y_6)
            \end{vmatrix}
            -Q'(y_1)
            \begin{vmatrix}
                P'(y_4)&P'(y_6)\\
                S(y_4)&S(y_6)
            \end{vmatrix}
            +S(y_1)
            \begin{vmatrix}
            P'(y_4)&P'(y_6)\\
                Q'(y_4)&Q'(y_6)
            \end{vmatrix}.
        \end{split}
    \end{equation}

    Note that
    \begin{equation*}
    \begin{split}
                S(y)&=\frac{R'(y)}{R(y)}\\
        &=\frac{1}{R(y)}\biggl(\frac{P'(y)}{Q'(y)}\biggr)'\\
        &=\frac{Q'(y)}{P'(y)}\frac{Q'(y)P''(y)-P'(y)Q''(y)}{(Q'(y))^2}\\
        &=\frac{P''(y)}{P'(y)}-\frac{Q''(y)}{Q'(y)}
    \end{split}
    \end{equation*}
    hence $S(y)$ is the difference of the logarithmic derivatives of $P'(y)$ and $Q'(y)$.

    If the poles of $P'(y_1)$ and $Q'(y_1)$ are different, then $S(y_1)$ would at least have a simple pole since it is the difference of the logarithmic derivatives of $P'(y_1)$ and $Q'(y_1)$. Meanwhile, the poles of $P'(y_1)$ and $Q'(y_1)$ cannot be simple poles since they are derivatives of meromorphic functions. Therefore, in the expression \eqref{eqs: dim1}, the poles of $S(y_1)$ could not be cancelled out by the poles of $P'(y_1)$ and $Q'(y_1)$, and hence $D(y_1,y_4,y_6)$ is nonzero. 

    Similarly, if the zeroes of $P'(y_1)$ and $Q'(y_1)$ are different, then $S(y_1)$ would at least have a simple pole since it is the difference of the logarithmic derivatives of $P'(y_1)$ and $Q'(y_1)$. Once again, the poles of $S(y_1)$ could not be cancelled out by the poles of $P'(y_1)$ and $Q'(y_1)$, and hence $D(y_1,y_4,y_6)$ is nonzero.

    If the poles and zeroes of $P'(y_1)$ and $Q'(y_1)$ are exactly the same, then $P'$ and $Q'$ would only differ up to a scalar, and this would contradict the linear independence of $P,Q$ and constant function $1$.

    As an ending remark, we do have a little detail not checked yet, that is, we need to check that $\bigl\lvert\begin{smallmatrix}
            P'(y_4)&P'(y_6)\\
                Q'(y_4)&Q'(y_6)
            \end{smallmatrix}\bigr\rvert$ 
    is nonzero so that it could not cancel out the poles of $S(y_1)$. Suppose the said determinant is zero, then $P'(y_4)/Q'(y_4)=P'(y_6)/Q'(y_6)$, hence $P'/Q'$ would be a constant and this contradicts the linear independence of $P,Q$ and constant function $1$. This completes the proof of Step $3$.\\

    \underline{\textbf{Step $4$: Bounding $|Z'_{\mathrm{sp}}(\Fp)|$}}

    In this step, we would bound $|Z'_{\mathrm{sp}}(\Fp)|$. The strategy is to use $D(y_1,y_4,y_6)=0$ to constraint variables $y_1,y_4,y_6$ and the equations \eqref{eqs: dim2}, \eqref{eqs: dim3} to constraint variables $y_2,y_3,y_5,y_7,y_8$.
    
    First, we would define auxiliary varieties $X_{a,b}$. For any $1\leq a,b \leq p$, let $X_{a,b}$ be the variety cut out by the following two equations in five variables $y_2,y_3,y_5,y_7,y_8$.
    \begin{gather}
        -P(y_2)-P(y_3)-P(y_5)+P(y_7)-P(y_8)=a\label{eqs: dim5}\\
        -Q(y_2)-Q(y_3)-Q(y_5)+Q(y_7)-Q(y_8)=b\label{eqs: dim6}
    \end{gather}
    Note that $X_{a,b}(\Fp)$ live in $\Fp^5$. We claim that $|X_{a,b}(\Fp)|\ll p^3$ where the implied constant does not depend on $a,b$.

    To prove our claim, we would decompose $X_{a,b}$ into smooth part and singular part. Let $J_{X}(y_7,y_8)$ be the determinant of the last two columns of the Jacobian matrix of the defining equations of $X_{a,b}$, that is
    \begin{equation*}
        J_{X}(y_7,y_8)=
        \begin{vmatrix}
            P'(y_7)&-P'(y_8)\\
            Q'(y_7)&-Q'(y_8)\\
        \end{vmatrix}
        =-P'(y_7)Q'(y_8)+Q'(y_7)P'(y_8).
    \end{equation*}
    
    Define
    \begin{gather*}
        X_{a,b,\mathrm{gen}}=X_{a,b}\cap \{(y_2,y_3,y_5,y_7,y_8)\in\mathbb{A}^5:J_{X}(y_7,y_8)\neq 0\}\\
        X_{a,b,\mathrm{sp}}=X_{a,b}\cap \{(y_2,y_3,y_5,y_7,y_8)\in\mathbb{A}^5:J_{X}(y_7,y_8)=0\}
    \end{gather*}
    Note that $X_{a,b}(\Fp)=X_{a,b,\mathrm{gen}}(\Fp)\cup X_{a,b,\mathrm{sp}}(\Fp)$. 

    To bound $|X_{a,b,\mathrm{gen}}(\Fp)|$, note that the associated complex analytic space of $X_{a,b,\mathrm{gen}}(\mathbb{C})$ is a $3$-dimensional complex manifold since the Jacobian matrices have full rank and we can apply the implicit function theorem. By similar arguments to Step $1$, we could compare various dimensions and conclude that $|X_{a,b,\mathrm{gen}}(\Fp)|\ll p^3$, where the implied constant does not depend on $a,b$. 

    To bound $|X_{a,b,\mathrm{sp}}(\Fp)|$, note that $J_{X}(y_7,y_8)$ is a nonzero rational function due to the linear independence of $P,Q$ and constant function $1$, hence by Lang--Weil bound \cite{MR0065218} (or Schwartz--Zippel lemma), there are at most $O(p)$ choices $y_7,y_8$ such that $J_{X}(y_7,y_8)=0$. For any given $y_7,y_8$, there are $p^2$ choices of $y_2,y_3$. For any given $y_7,y_8,y_2,y_3$, there are $O(1)$ choices $y_5$ such that $y_7,y_8,y_2,y_3,y_5$ satisfy equation \eqref{eqs: dim5}. Hence, we conclude $|X_{a,b,\mathrm{sp}}(\Fp)|\ll p^3$ and hence $|X_{a,b}(\Fp)|\ll p^3$.

    Finally, for $(y_1,y_2,...,y_8)\in Z'_{\mathrm{sp}}(\Fp)$, there are at most $O(p^2)$ choices of $y_1,y_4,y_6$ since $D(y_1,y_4,y_6)$ is a nonzero rational function [Step $3$] and Lang--Weil bound \cite{MR0065218} (or Schwartz--Zippel lemma) would imply the bounds. Fix $y_1,y_4,y_6$, for any $(y_1,y_2,...,y_8)\in Z'_{\mathrm{sp}}(\Fp)$, the tuples $(y_2,y_3,y_5,y_7,y_8)\in X_{a,b}(\Fp)$ for some $a,b$ by equations \eqref{eqs: dim2} and \eqref{eqs: dim3}, hence we have at most $O(p^3)$ choices of $y_2,y_3,y_5,y_7,y_8$. Therefore, we conclude that $|Z'_{\mathrm{sp}}(\Fp)|\ll p^5$.\\

    \underline{\textbf{Step $5$: Bounding $|Z'_{i}(\Fp)|$ for $i=1,4,6,7$}}

    This step is more of a routine step and can be skipped on a first reading.

    We would demonstrate the approach for bounding $|Z'_{1}(\Fp)|$, and the bounds for $|Z'_{4}(\Fp)|$, $|Z'_{6}(\Fp)|$, $|Z'_{7}(\Fp)|$ follow similarly. Recall that 
    \begin{equation*}
        Z'_{1}=Z'\cap \{(y_1,y_2,...,y_8)\in \mathbb{A}^8:R(y_1)=0\}.
    \end{equation*}
\begin{itemize} 
    \item For any $(y_1,y_2,...,y_8)\in Z'_{1}(\Fp)$, there are at most $O(1)$ choices of $y_1$ by equation $R(y_1)=0$.

    \item By equation \eqref{eqs: dim4} and $R(y_1)=0$, we have $R(y_2)R(y_3)R(y_5)R(y_8)=0$, hence for any given $y_1$, there are at most $O(p^3)$ choices of $y_2,y_3,y_5,y_8$.

    \item For any given $y_1,y_2,y_3,y_5,y_8$, there are $p^2$ choices of $y_4,y_6$.

    \item For any given $y_1,y_2,y_3,y_5,y_8,y_4,y_6$, there are at most $O(1)$ choices of $y_7$ by equation \eqref{eqs: dim2}.

    \item Thus, there are at most $O(p^3p^2)=O(p^5)$ choices of $y_1,y_2,y_3,y_5,y_8,y_4,y_6,y_7$. 
\end{itemize}
    Therefore, we conclude that $|Z'_{1}(\Fp)|\ll p^5$. By putting the bounds for $|Z'_{\mathrm{gen}}(\Fp)|,|Z'_{\mathrm{sp}}(\Fp)|$ and $|Z'_{i}(\Fp)|$ for $i=1,4,6,7$, we further conclude that $|Z'(\Fp)|\ll p^5$.\\

    \underline{\textbf{Step $6$: Bounding $|Z(\Fp)|$}}

    This step is quite representative and the same method would also be used in Steps $5,7,8$. Heuristically, when there are enough constraints, we could pick a subset of variables so that the system of equations becomes ``triangular''.
\begin{itemize}
    \item For any $(y_1,y_2,...,y_{16})\in Z(\Fp)$, we have $(y_1,y_2,...,y_8)\in Z'(\Fp)$ by construction of $Z'$, hence there are at most $O(p^5)$ choices of $y_1,y_2,y_3,y_4,y_5,y_6,y_7,y_8$.

    \item For any given $y_1,y_2,y_3,y_4,y_5,y_6,y_7,y_8$, there are $p$ choices of $y_9$.

    \item For any given $y_1,y_2,y_3,y_4,y_5,y_6,y_7,y_8,y_9$, there are at most $O(1)$ choices of $y_{11},y_{10}$ by equations \eqref{eqs: roth1}, \eqref{eqs: roth5}.

    \item For any given $y_1,y_2,y_3,y_4,y_5,y_6,y_7,y_8,y_9,y_{11},y_{10}$, there are at most $O(1)$ choices of $y_{12},y_{15}$ by equations \eqref{eqs: roth2}, \eqref{eqs: roth6}.

    \item For any given $y_1,y_2,y_3,y_4,y_5,y_6,y_7,y_8,y_9,y_{11},y_{10},y_{12},y_{15}$, there are at most $O(1)$ choices of $y_{13},y_{16}$ by equations \eqref{eqs: roth3}, \eqref{eqs: roth7}.

    \item For any given $y_1,y_2,y_3,y_4,y_5,y_6,y_7,y_8,y_9,y_{11},y_{10},y_{12},y_{15},y_{13},y_{16}$, there are at most $O(1)$ choices of $y_{14}$ by equation \eqref{eqs: roth4}.

    \item Thus, there are at most $O(p^5p)=O(p^6)$ choices of $y_1,y_2,y_3,y_4,y_5,y_6,y_7,y_8,y_9,y_{11},\\y_{10},y_{12},y_{15},y_{13},y_{16},y_{14}$. 
\end{itemize}
    Therefore, we conclude that $|Z(\Fp)|\ll p^6$.\\

    \underline{\textbf{Step $7$: Bounding $|Y_{i}(\Fp)|$ for $1\leq i\leq 8$}}

    This step is more of a routine step and can be skipped on a first reading.

    We would demonstrate the approach for bounding $|Y_{1}(\Fp)|$, and the bounds for $|Y_{i}(\Fp)|$ for $2\leq i\leq 8$ follow similarly. Recall that
    \begin{equation*}
        Y_{1}=Y\cap\{(y_1,y_2,...,y_{16})\in \mathbb{A}^{16}:Q'(y_1)=0\}.
    \end{equation*}

\begin{itemize}
    \item For any $(y_1,y_2,...,y_{16})\in Y_{1}(\Fp)$, there are at most $O(1)$ choices of $y_1$ by equation $Q'(y_1)=0$.

    \item For any given $y_1$, there are $p^2$ choices of $y_4,y_6$.

    \item Fix $y_1,y_4,y_6$, for any $(y_1,y_2,...,y_{16})\in Y_{1}(\Fp)$, we have $(y_2,y_3,y_5,y_7,y_8)\in X_{a,b}(\Fp)$ for some $a,b$ by equations \eqref{eqs: dim2}, \eqref{eqs: dim3}. (Note that the defining equations of the Roth variety $Y$ are symmetry in $y_1,y_2,...,y_8$ and $y_9,y_{10},...,y_{16}$, so the equations \eqref{eqs: dim2}, \eqref{eqs: dim3} are the ``dual" version of the equations \eqref{eqs: roth9}, \eqref{eqs: roth10}.) Recall that we have shown that $|X_{a,b}(\Fp)|\ll p^3$ in Step $4$, hence there are at most $O(p^3)$ choices of $y_2,y_3,y_5,y_7,y_8$. 

    \item For any given $y_1,y_2,y_3,y_4,y_5,y_6,y_7,y_8$, there are $p$ choices of $y_9$.

    \item For any given $y_1,y_2,y_3,y_4,y_5,y_6,y_7,y_8,y_9$, there are at most $O(1)$ choices of $y_{11},y_{10}$ by equations \eqref{eqs: roth1}, \eqref{eqs: roth5}.

    \item For any given $y_1,y_2,y_3,y_4,y_5,y_6,y_7,y_8,y_9,y_{11},y_{10}$, there are at most $O(1)$ choices of $y_{12},y_{15}$ by equations \eqref{eqs: roth2}, \eqref{eqs: roth6}.

    \item For any given $y_1,y_2,y_3,y_4,y_5,y_6,y_7,y_8,y_9,y_{11},y_{10},y_{12},y_{15}$, there are at most $O(1)$ choices of $y_{13},y_{16}$ by equations \eqref{eqs: roth3}, \eqref{eqs: roth7}.

    \item For any given $y_1,y_2,y_3,y_4,y_5,y_6,y_7,y_8,y_9,y_{11},y_{10},y_{12},y_{15},y_{13},y_{16}$, there are at most $O(1)$ choices of $y_{14}$ by equation \eqref{eqs: roth4}.

    \item Thus, there are at most $O(p^2p^3p)=O(p^6)$ choices of $y_1,y_2,y_3,y_4,y_5,y_6,y_7,y_8,y_9,\\y_{11},y_{10},y_{12},y_{15},y_{13},y_{16},y_{14}$.

\end{itemize}

    Therefore, we conclude that $|Y_{1}(\Fp)|\ll p^6$.\\

    \underline{\textbf{Step $8$: Bounding $|Y_{9,10}(\Fp)|$}}

    This step is more of a routine step and can be skipped on a first reading.

    We would define auxiliary varieties $Y_{9,10}'$ and $W_{a,b}$. Let $Y_{9,10}'$ be the variety cut out by the following three equations in eight variables $y_9,y_{10},...,y_{16}$.
    \begin{gather}
        P(y_{9})-P(y_{10})-P(y_{11})+P(y_{12})-P(y_{13})+P(y_{14})+P(y_{15})-P(y_{16})=0\label{eqs: dim7}\\
        Q(y_{9})-Q(y_{10})-Q(y_{11})+Q(y_{12})-Q(y_{13})+Q(y_{14})+Q(y_{15})-Q(y_{16})=0\label{eqs: dim8}\\
        -P'(y_9)Q'(y_{10})+Q'(y_9)P'(y_{10})=0
    \end{gather}
    For any $1\leq a,b\leq p$, let $W_{a,b}$ be the variety cut out by the following two equations in six variables $y_{11},y_{12},y_{13},y_{14},y_{15},y_{16}$.
    \begin{gather}
        -P(y_{11})+P(y_{12})-P(y_{13})+P(y_{14})+P(y_{15})-P(y_{16})=a\label{eqs: dim9}\\
        -Q(y_{11})+Q(y_{12})-Q(y_{13})+Q(y_{14})+Q(y_{15})-Q(y_{16})=b\label{eqs: dim10}
    \end{gather}
    Note that $Y'_{9,10}(\Fp)\subset \Fp^8$ and $W_{a,b}(\Fp)\subset \Fp^6$. We would like to show that $|W_{a,b}(\Fp)|\ll p^4$, and use this to show that $|Y'_{9,10}(\Fp)|\ll p^5$, and use this to show that $|Y_{9,10}(\Fp)|\ll p^6$.

    The strategy is quite similar to previous steps. First, we decompose $W_{a,b}$ into smooth part and singular part. Let $J_W(y_{15},y_{16})$ be the determinant of the last two columns of the Jacobian matrix of the defining equations of $W_{a,b}$, that is
    \begin{equation*}
        J_W(y_{15},y_{16})=
        \begin{vmatrix}
            P'(y_{15})&-P'(y_{16})\\
            Q'(y_{15})&-Q'(y_{16})
        \end{vmatrix}
        =-P'(y_{15})Q'(y_{16})+Q'(y_{15})P'(y_{16}).
    \end{equation*}

    Define
    \begin{gather*}
        W_{a,b,\mathrm{gen}}=W_{a,b}\cap \{(y_{11},y_{12},y_{13},y_{14},y_{15},y_{16})\in \mathbb{A}^6:J_W(y_{15},y_{16})\neq 0\}\\
        W_{a,b,\mathrm{sp}}=W_{a,b}\cap \{(y_{11},y_{12},y_{13},y_{14},y_{15},y_{16})\in \mathbb{A}^6:J_W(y_{15},y_{16})=0\}
    \end{gather*}

    Note that the associated complex analytic space of $W_{a,b,\mathrm{gen}}(\mathbb{C})$ is a $4$-dimensional complex manifold since the Jacobian matrices have full rank and we can apply the implicit function theorem. By similar arguments to Step $1$, we could compare various dimensions and conclude that $|W_{a,b,\mathrm{gen}}(\Fp)|\ll p^4$, where the implied constant does not depend on $a,b$. 

    On the other hand, note that $J_W(y_{15},y_{16})$ is a nonzero rational function due to linear independence of $P,Q$ and constant function $1$. For any $(y_{11},y_{12},y_{13},y_{14},y_{15},y_{16})\in W_{a,b,\mathrm{sp}}(\Fp)$, there are at most $O(p)$ choices of $y_{15},y_{16}$ by equation $J_W(y_{15},y_{16})=0$ and Lang--Weil bound \cite{MR0065218} (or Schwartz--Zippel lemma). For any given $y_{15},y_{16}$, there are $p^3$ choices of $y_{11},y_{12},y_{13}$. For any given $y_{15},y_{16},y_{11},y_{12},y_{13}$, there are at most $O(1)$ choices of $y_{14}$ by equation \eqref{eqs: dim9}. Therefore, we conclude that $|W_{a,b,\mathrm{sp}}(\Fp)|\ll p^4$ and hence $|W_{a,b}(\Fp)|\ll p^4$ where the implied constants do not depend on $a,b$.

    Next, we would show that $|Y'_{9,10}(\Fp)|\ll p^5$. Note that $-P'(y_9)Q'(y_{10})+Q'(y_9)P'(y_{10})$ is a nonzero rational function due to linear independence of $P,Q$ and constant function $1$. For any $(y_9,y_{10},y_{11},y_{12},y_{13},y_{14},y_{15},y_{16})\in Y'_{9,10}(\Fp)$, there are at most $O(p)$ choices of $y_{9},y_{10}$ by equation $-P'(y_9)Q'(y_{10})+Q'(y_9)P'(y_{10})=0$ and Lang--Weil bound \cite{MR0065218} (or Schwartz--Zippel lemma). Fix $y_9,y_{10}$, for any $(y_9,y_{10},y_{11},y_{12},y_{13},y_{14},y_{15},y_{16})\in Y'_{9,10}(\Fp)$, we have $(y_{11},y_{12},y_{13},y_{14},y_{15},y_{16})\in W_{a,b}(\Fp)$ for some $a,b$, hence there are at most $O(p^4)$ choices of $y_{11},y_{12},y_{13},y_{14},y_{15},y_{16}$. Therefore, we conclude that $|Y'_{9,10}(\Fp)|\ll p^5$.

    Finally, we would show that $|Y_{9,10}(\Fp)|\ll p^6$.
    \begin{itemize}
    \item For any $(y_1,y_2,...,y_{16})\in Y_{9,10}(\Fp)$, we have $(y_9,y_{10},...,y_{16})\in Y'_{9,10}(\Fp)$ by equations \eqref{eqs: roth9}, \eqref{eqs: roth10} and the construction of $Y'_{9,10}$, hence there are at most $O(p^5)$ choices of $y_9,y_{10},y_{11},y_{12},y_{13},y_{14},y_{15},y_{16}$.

    \item For any given $y_9,y_{10},y_{11},y_{12},y_{13},y_{14},y_{15},y_{16}$, there are $p$ choices of $y_1$.

    \item For any given $y_9,y_{10},y_{11},y_{12},y_{13},y_{14},y_{15},y_{16},y_1$, there are at most $O(1)$ choices of $y_{3},y_{2}$ by equations \eqref{eqs: roth1}, \eqref{eqs: roth5}.

    \item For any given $y_9,y_{10},y_{11},y_{12},y_{13},y_{14},y_{15},y_{16},y_1,y_{3},y_{2}$, there are at most $O(1)$ choices of $y_{4},y_{7}$ by equations \eqref{eqs: roth2}, \eqref{eqs: roth6}.

    \item For any given $y_9,y_{10},y_{11},y_{12},y_{13},y_{14},y_{15},y_{16},y_1,y_{3},y_{2},y_{4},y_{7}$, there are at most $O(1)$ choices of $y_{5},y_{8}$ by equations \eqref{eqs: roth3}, \eqref{eqs: roth7}.

    \item For any given $y_9,y_{10},y_{11},y_{12},y_{13},y_{14},y_{15},y_{16},y_1,y_{3},y_{2},y_{4},y_{7},y_{5},y_{8}$, there are at most $O(1)$ choices of $y_{6}$ by equation \eqref{eqs: roth4}.

    \item Thus, there are at most $O(p^5p)=O(p^6)$ choices of $y_9,y_{10},y_{11},y_{12},y_{13},y_{14},y_{15},y_{16},y_1,\\y_{3},y_{2},y_{4},y_{7},y_{5},y_{8},y_6$. 

\end{itemize}

    Therefore, we conclude that $|Y_{9,10}(\Fp)|\ll p^6$.
\end{proof}

\section{Degree lowering}\label{sec: degree}

In this section, we would put everything together to prove Theorem \ref{thm: main}. In Sections \ref{sec: PET} and \ref{sec: dim}, we obtain Gowers norm control for counting operator for corners generated by $P(t),Q(t)$, hence we could apply the degree lowering pioneered by Peluse \cite{peluse} to complete the proof of Theorem \ref{thm: main}. Actually, we need a different version of degree lowering that works for directional Gowers norms, and this version of degree lowering is developed by Kuca \cite{kuca-2} in his work on multidimensional Szemer\'{e}di theorem in finite fields. However, we do not need full strength of Kuca's work since we start degree lowering with $U^2$-norm as opposed to general $U^s$-norm, for this reason, and for keeping track of the power-saving exponents and the sake of completeness, we reproduce Kuca's arguments in this section. In this section, we closely follow Kuca's paper \cite[Section 6]{kuca-2}.

\begin{defn}[Eigenfunctions]\hfill
    \begin{enumerate}
        \item A function $\chi:\Fp^2\longrightarrow\C$ is an eigenfunction with respect to the first coordinate if there exists a subset $E\subset \Fp$ and phase functions $\phi:\Fp\longrightarrow\Fp,\psi:\Fp\longrightarrow\R$ such that
        \begin{equation*}
            \chi(x_1,x_2)=1_{E}(x_2)e_p(\phi(x_2)x_1)e(\psi(x_2))
        \end{equation*}
        for all $(x_1,x_2)\in \Fp^2$.
        \item A function $\chi:\Fp^2\longrightarrow\C$ is an eigenfunction with respect to the second coordinate if there exists a subset $E\subset \Fp$ and phase functions $\phi:\Fp\longrightarrow\Fp,\psi:\Fp\longrightarrow\R$ such that
        \begin{equation*}
            \chi(x_1,x_2)=1_{E}(x_1)e_p(\phi(x_1)x_2)e(\psi(x_1))
        \end{equation*}
        for all $(x_1,x_2)\in \Fp^2$.
    \end{enumerate}
\end{defn}

The following proposition could be regarded as an inverse theorem for $U^2(\Fp\times 0)$-norm and $U^2(0\times \Fp)$-norm. 

\begin{prop}\label{prop: inverse}
    Let $f:\Fp^2\longrightarrow\C$ be a $1$-bounded function.
    \begin{enumerate}
        \item Suppose $\lVert f\rVert_{U^2(\Fp\times 0)}\geq \delta$, then there exists an eigenfunction with respect to the first coordinate $\chi$ such that
        \begin{equation*}
            \E_{x_1,x_2}f(x_1,x_2)\chi(x_1,x_2)\geq \delta^4
        \end{equation*}
        and $\E_{x_1}f(x_1,x_2)\chi(x_1,x_2)\geq 0$ for all $x_2\in \Fp$.
        \item Suppose $\lVert f\rVert_{U^2(0\times \Fp)}\geq \delta$, then there exists an eigenfunction with respect to the second coordinate $\chi$ such that
        \begin{equation*}
            \E_{x_1,x_2}f(x_1,x_2)\chi(x_1,x_2)\geq \delta^4
        \end{equation*}
        and $\E_{x_2}f(x_1,x_2)\chi(x_1,x_2)\geq 0$ for all $x_1\in \Fp$.
    \end{enumerate}
\end{prop}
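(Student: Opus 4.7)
The plan is to prove part (1) directly and then deduce part (2) from it by the coordinate swap $f(x_1,x_2)\leftrightarrow f(x_2,x_1)$, which exchanges $\lVert\cdot\rVert_{U^2(\Fp\times 0)}$ with $\lVert\cdot\rVert_{U^2(0\times\Fp)}$ and the two classes of eigenfunctions, so that (2) becomes a relabelling of (1).

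For (1), the first move is to unfold the box norm. Since both differencing subgroups are $\Fp\times 0$, only the first coordinate is differenced, and Fubini gives
\[
\lVert f \rVert_{U^2(\Fp\times 0)}^{4}=\E_{x_2}\lVert f(\cdot,x_2)\rVert_{U^2(\Fp)}^{4},
\]
so the hypothesis $\lVert f\rVert_{U^2(\Fp\times 0)}\geq \delta$ becomes $\E_{x_2}\lVert f(\cdot,x_2)\rVert_{U^2(\Fp)}^{4}\geq \delta^{4}$. This reduces matters to a fibre-by-fibre application of the classical $U^{2}(\Fp)$ inverse theorem.

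For each $x_2\in \Fp$, I would choose $\phi(x_2)\in \Fp$ maximising $|\widehat{f(\cdot,x_2)}(-\phi(x_2))|$. Combining Parseval's identity with the Fourier identity $\lVert g\rVert_{U^{2}(\Fp)}^{4}=\sum_{\xi}|\widehat{g}(\xi)|^{4}$ and the $1$-boundedness of $f$, we obtain
\[
\lVert f(\cdot,x_2)\rVert_{U^{2}(\Fp)}^{4}\leq \bigl|\widehat{f(\cdot,x_2)}(-\phi(x_2))\bigr|^{2}\lVert f(\cdot,x_2)\rVert_{2}^{2}\leq \bigl|\widehat{f(\cdot,x_2)}(-\phi(x_2))\bigr|^{2}.
\]
Then I pick $\psi(x_2)\in \R$ so that $e(\psi(x_2))\widehat{f(\cdot,x_2)}(-\phi(x_2))=\bigl|\widehat{f(\cdot,x_2)}(-\phi(x_2))\bigr|$, i.e., $\psi(x_2)$ simply cancels the argument of that Fourier coefficient (take $\psi(x_2)=0$ at any $x_2$ where the coefficient vanishes).

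Finally, setting $\chi(x_1,x_2)\coloneqq e_p(\phi(x_2)x_1)e(\psi(x_2))$ (so $E=\Fp$ in the eigenfunction template), we get for every $x_2$
\[
\E_{x_1}f(x_1,x_2)\chi(x_1,x_2)=e(\psi(x_2))\widehat{f(\cdot,x_2)}(-\phi(x_2))=\bigl|\widehat{f(\cdot,x_2)}(-\phi(x_2))\bigr|\geq 0,
\]
which is the required pointwise nonnegativity. Averaging over $x_2$ and using the trivial bound $\lVert f(\cdot,x_2)\rVert_{U^2(\Fp)}\leq 1$ (so $t^{2}\geq t^{4}$ on $[0,1]$) yields
\[
\E_{x_1,x_2}f(x_1,x_2)\chi(x_1,x_2)\geq \E_{x_2}\lVert f(\cdot,x_2)\rVert_{U^2(\Fp)}^{2}\geq \E_{x_2}\lVert f(\cdot,x_2)\rVert_{U^2(\Fp)}^{4}\geq \delta^{4},
\]
completing (1). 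There is no genuine obstacle here; the only care needed is pairing the phase $\psi(x_2)$ with the optimal frequency $\phi(x_2)$ so that the pointwise nonnegativity and the $L^{1}$ lower bound hold simultaneously.
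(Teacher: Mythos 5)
Your proof is correct and follows essentially the same route as the paper: unfold the directional box norm fibrewise, use the Fourier identity $\lVert g\rVert_{U^2(\Fp)}^4=\lVert\widehat g\rVert_{l^4}^4$, single out the dominant frequency per fibre and rotate by a phase to make the inner average nonnegative. The only cosmetic difference is that you bound $\lVert g\rVert_{U^2}^4\leq\lvert\widehat g(-\phi)\rvert^2$ and then invoke $t^4\leq t^2$ on $[0,1]$, whereas the paper collapses this to $\lVert\widehat g\rVert_{l^4}^4\leq\lVert\widehat g\rVert_\infty^2\lVert g\rVert_2^2\leq\lVert\widehat g\rVert_\infty$ in one step; the paper also proves part (2) and notes (1) is symmetric, while you prove (1) and symmetrize — both equivalent.
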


\begin{proof}
    The proofs of Proposition \ref{prop: inverse}.(1) and \ref{prop: inverse}.(2) are similar, hence we would only prove \ref{prop: inverse}.(2).

    For any $x_1$, let $g_{x_1}(x_2)=f(x_1,x_2)$. Note that
    \begin{equation*}
        \begin{split}
            \delta^4\leq \lVert f\rVert_{U^2(0\times \Fp)}^4&=\E_{x_1,x_2,h,k} f(x_1,x_2+h+k)\overline{f}(x_1,x_2+h)\overline{f}(x_1,x_2+k)f(x_1,x_2)\\
            &=\E_{x_1} \lVert g_{x_1}\rVert_{U^2}^4\\
            &=\E_{x_1}\lVert \widehat{g_{x_1}}\rVert_{l^4}^4.
        \end{split}
    \end{equation*}

    Observe that
    \begin{equation*}
    \begin{split}
        \lVert \widehat{g_{x_1}}\rVert_{l^4}^4=\sum_{\xi} \lvert\widehat{g_{x_1}}(\xi)\rvert^4&\leq \lVert \widehat{g_{x_1}}\rVert_{\infty}^2 \sum_{\xi} \lvert\widehat{g_{x_1}}(\xi)\rvert^2\\
        &=\lVert \widehat{g_{x_1}}\rVert_{\infty}^2\lVert g_{x_1}\rVert_{2}^2\\
        &\leq \lVert \widehat{g_{x_1}}\rVert_{\infty}.
    \end{split}        
    \end{equation*}

    Hence, for every $x_1$, there exists a frequency $\xi_{x_1}$ such that
    \begin{equation*}
        \delta^4\leq \E_{x_1}\lvert\widehat{g_{x_1}}(\xi_{x_1})\rvert=\E_{x_1}\biggl\lvert\E_{x_2}f(x_1,x_2)e_p(-\xi_{x_1} x_2)\biggr\rvert.
    \end{equation*}

    Choosing phase functions $\phi:\Fp\longrightarrow\Fp,\psi:\Fp\longrightarrow\R$ such that
    \begin{equation*}
        \biggl\lvert\E_{x_2}f(x_1,x_2)e_p(-\xi_{x_1} x_2)\biggr\rvert=\E_{x_2}f(x_1,x_2)e_p(\phi(x_1)x_2)e(\psi(x_1))
    \end{equation*}
    completes the proof. (Note that in the definition of eigenfunctions, there is a characteristic function $1_{E}$ for some subset $E$, here we can choose $E=\Fp$.)
\end{proof}

The following proposition is an asymptotic formula for two term progressions, which is needed for induction.

\begin{prop}\label{prop: 2 terms}
    Let $P(t)\in \Q(t)$ be a rational function over $\Q$.
    \begin{enumerate}
        \item For all $1$-bounded functions $f_0,f_1:\Fp^2\longrightarrow\C$, we have
        \begin{equation*}
            \E_{x_1,x_2,y}f_0(x_1,x_2)f_1(x_1+P(y),x_2)=\E_{x_1,x_2}\Bigl(f_0(x_1,x_2)\E_{a}f_1(a,x_2)\Bigr)+O\bigl(p^{-\frac{1}{2}}\bigr).
        \end{equation*}
        \item For all $1$-bounded functions $f_0,f_1:\Fp^2\longrightarrow\C$, we have
        \begin{equation*}
            \E_{x_1,x_2,y}f_0(x_1,x_2)f_1(x_1,x_2+P(y))=\E_{x_1,x_2}\Bigl(f_0(x_1,x_2)\E_{a}f_1(x_1,a)\Bigr)+O\bigl(p^{-\frac{1}{2}}\bigr).
        \end{equation*}
    \end{enumerate}
\end{prop}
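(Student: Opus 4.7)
The plan is a standard Fourier analysis argument on $\Fp$ combined with Weil's exponential sum bound for rational function phases. Since parts (1) and (2) are related by swapping the roles of the two coordinates of $\Fp^2$, it suffices to prove (1); part (2) then follows by applying (1) to $g_i(x_1,x_2) \coloneqq f_i(x_2,x_1)$.

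First I would Fourier expand $f_1$ in its first coordinate. Writing $\widehat{f}_1(\xi, x_2) \coloneqq \E_a f_1(a, x_2) e_p(-\xi a)$ for the partial Fourier transform, one has
\begin{equation*}
\E_y f_1(x_1+P(y), x_2) = \sum_{\xi} \widehat{f}_1(\xi, x_2) e_p(\xi x_1) K(\xi),
\end{equation*}
where $K(\xi) \coloneqq \E_y e_p(\xi P(y))$. Isolating the $\xi = 0$ contribution produces exactly $\widehat{f}_1(0,x_2) = \E_a f_1(a, x_2)$, which is the main term after pairing against $f_0$ and averaging over $x_1, x_2$.

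The remaining error is $E(x_1,x_2) \coloneqq \sum_{\xi \neq 0} \widehat{f}_1(\xi, x_2) e_p(\xi x_1) K(\xi)$. The essential input is Weil's bound for exponential sums with rational function phases: since $P$ is a non-constant rational function over $\Q$, for all sufficiently large $p$ and every $\xi \in \Fp \setminus \{0\}$, the phase $\xi P(y) \in \Fp(y)$ is non-constant and not of Artin--Schreier form, so $|K(\xi)| \ll_P p^{-1/2}$. Combined with Parseval's identity in the first variable,
\begin{equation*}
\|E\|_2^2 = \E_{x_2} \sum_{\xi \neq 0} |\widehat{f}_1(\xi, x_2)|^2 |K(\xi)|^2 \ll p^{-1} \E_{x_2} \E_a |f_1(a, x_2)|^2 \ll p^{-1}.
\end{equation*}

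A final application of Cauchy--Schwarz then yields
\begin{equation*}
\bigl|\E_{x_1,x_2} f_0(x_1,x_2) E(x_1,x_2)\bigr| \leq \|f_0\|_2 \|E\|_2 \ll p^{-1/2},
\end{equation*}
which completes the proof of (1). The only real obstacle is a bookkeeping one: invoking Weil's bound in the correct form for rational (as opposed to polynomial) phase functions and verifying the Artin--Schreier non-degeneracy for all large $p$. These considerations are of the same flavour as those already used implicitly in Section \ref{sec: dim}, so they require no new ideas.
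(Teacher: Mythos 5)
Your argument is correct, and it is essentially the same underlying argument as the paper's, just unwound one level. The paper's proof fixes $x_2$, observes that the average over $x_1,y$ becomes the one-dimensional two-term rational-function progression count, and then simply cites \cite[Proposition 5.1]{hong-lim}; your proof is a self-contained reproduction of what that cited lemma does, namely partial Fourier expansion in the $x_1$-variable, isolation of the zero frequency as the main term, the Bombieri--Weil bound $\lvert\E_y e_p(\xi P(y))\rvert\ll p^{-1/2}$ for $\xi\neq 0$, and Parseval plus Cauchy--Schwarz to control the error. One small remark: you correctly note that $P$ must be non-constant for the Weil bound to apply; the proposition as stated omits this hypothesis, though it is needed and is satisfied in every use (there $P,Q$ are linearly independent from the constant function $1$). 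Being explicit about it, as you were, is the more careful option.
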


\begin{proof}
    The proofs of Proposition \ref{prop: 2 terms}.(1) and \ref{prop: 2 terms}.(2) are similar, hence we would only prove Proposition \ref{prop: 2 terms}.(1).

    Note that after fixing $x_2$, the counting operator reduces to the one dimensional case, and we can invoke the results \cite[Proposition 5.1]{hong-lim} for one dimensional progressions. Hence, we have
    \begin{equation*}
        \begin{split}
            \E_{x_1,x_2,y}f_0(x_1,x_2)f_1(x_1+P(y),x_2)&=\E_{x_2}\Bigl(\E_{x_1,y}f_0(x_1,x_2)f_1(x_1+P(y),x_2)\Bigr)\\
            &=\E_{x_2}\Bigl(\E_{x_1}f_0(x_1,x_2)\E_{a}f_1(a,x_2)+O\bigl(p^{-\frac{1}{2}}\bigr)\Bigr)\\
            &=\E_{x_1,x_2}\Bigl(f_0(x_1,x_2)\E_{a}f_1(a,x_2)\Bigr)+O\bigl(p^{-\frac{1}{2}}\bigr)
        \end{split}
    \end{equation*}
\end{proof}

The following theorem contains the induction process for degree lowering. Essentially, we would induct on the number of the eigenfunctions in the counting operators. That being said, we choose to write the proof not in the exact format of an induction so that the notations could be less dense. Also, note that Theorem \ref{thm: degree}.(3) is same as Theorem \ref{thm: main}. 

\begin{thm}\label{thm: degree}
    Let $P(t),Q(t)\in \Q(t)$ be rational functions such that $P(t),Q(t)$ and the constant function $1$ are linearly independent over $\Q$, and $f_0,f_1,f_2:\Fp^2\longrightarrow\C$ be $1$-bounded functions.
    \begin{enumerate}
        \item Suppose $f_1$ is an eigenfunction with respect to the first coordinate and $f_2$ is an eigenfunction with respect to the second coordinate, then we have
        \begin{multline*}
        \E_{x_1,x_2,y} f_0(x_1,x_2) f_1(x_1+P(y),x_2) f_2(x_1,x_2+Q(y))\\
        =\E_{x_1,x_2} \Bigl(f_0(x_1,x_2) \E_{a}f_1(a,x_2) \E_{b}f_2(x_1,b)\Bigr)+O_{P,Q}\bigl(p^{-\frac{1}{2}}\bigr).
    \end{multline*}
    \item Suppose $f_2$ is an eigenfunction with respect to the second coordinate, then we have
        \begin{multline*}
        \E_{x_1,x_2,y} f_0(x_1,x_2) f_1(x_1+P(y),x_2) f_2(x_1,x_2+Q(y))\\
        =\E_{x_1,x_2} \Bigl(f_0(x_1,x_2) \E_{a}f_1(a,x_2) \E_{b}f_2(x_1,b)\Bigr)+O_{P,Q}\bigl(p^{-\frac{1}{640}}\bigr).
    \end{multline*}
    \item For all $1$-bounded functions $f_0,f_1,f_2:\Fp^2\longrightarrow\C$, we have
        \begin{multline*}
        \E_{x_1,x_2,y} f_0(x_1,x_2) f_1(x_1+P(y),x_2) f_2(x_1,x_2+Q(y))\\
        =\E_{x_1,x_2} \Bigl(f_0(x_1,x_2) \E_{a}f_1(a,x_2) \E_{b}f_2(x_1,b)\Bigr)+O_{P,Q}\bigl(p^{-\frac{1}{40960}}\bigr).
    \end{multline*}
    \end{enumerate}
\end{thm}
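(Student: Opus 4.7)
The plan is to prove Parts (1), (2), (3) of Theorem \ref{thm: degree} in order, each feeding into the next through a Peluse--Kuca style degree-lowering argument driven by the Gowers-norm control from Sections \ref{sec: PET}--\ref{sec: dim}. For Part (1), the eigenfunction structure of $f_1$ and $f_2$ lets me pull out the $y$-dependent phases,
\begin{equation*}
f_1(x_1+P(y),x_2) = f_1(x_1,x_2) e_p(\phi_1(x_2) P(y)), \quad f_2(x_1,x_2+Q(y)) = f_2(x_1,x_2) e_p(\phi_2(x_1) Q(y)),
\end{equation*}
so the $y$-average collapses to the character sum $K(a,b) := \E_y e_p(aP(y)+bQ(y))$ evaluated at $(a,b)=(\phi_1(x_2),\phi_2(x_1))$. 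The diagonal $(a,b)=(0,0)$ reproduces the main term exactly, since $\E_a f_1(a,x_2)$ vanishes at every $x_2$ with $\phi_1(x_2)\neq 0$ and similarly for $f_2$. Off the diagonal, the linear independence of $P$, $Q$, and $1$ over $\Q$ forces $aP+bQ$ to be non-constant, so Weil's bound gives $|K(a,b)|\ll p^{-1/2}$, settling Part (1) with error $O(p^{-1/2})$.

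For Part (2), I decompose $f_1 = \tilde f_1 + (f_1-\tilde f_1)$ where $\tilde f_1(x_2) := \E_a f_1(a,x_2)$ depends only on $x_2$. The contribution of $\tilde f_1$ reduces to a one-variable progression in $f_2$, which Proposition \ref{prop: 2 terms}.(2) handles to give the claimed main term modulo $O(p^{-1/2})$. For the residual $\Lambda(f_0, f_1-\tilde f_1, f_2)$, Theorem \ref{thm: PETdual} combined with the dimension bound $|Y(\Fp)|\ll p^6$ from Theorem \ref{thm: dim} yields $|\Lambda(f_0, f_1-\tilde f_1, f_2)| \ll \|f_1-\tilde f_1\|_{U^2(\Fp\times 0)}^{1/4}$. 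The degree-lowering step controls this $U^2$-norm: if it exceeds $\delta$, Proposition \ref{prop: inverse}.(1) supplies an eigenfunction $\chi$ in the first coordinate correlating with $f_1-\tilde f_1$ at level $\delta^4$, and the mean-zero identity $\E_a(f_1-\tilde f_1)(a,x_2)=0$ forces the phase $\phi_\chi(x_2)$ to be non-zero on the effective support of $\chi$ (eigenfunctions with zero phase contribute zero correlation). Applying Part (1) to $\Lambda(f_0,\chi,f_2)$, the main-term factor $\E_a \chi(a,x_2)$ vanishes pointwise, so $|\Lambda(f_0,\chi,f_2)|=O(p^{-1/2})$. A standard energy-decrement iteration, replacing $f_1-\tilde f_1$ by $f_1-\tilde f_1-c\chi$ (preserving the mean-zero property since $\E_a \chi(a,x_2)=0$) and tracking the $L^2$-energy, converts this into the $O(p^{-1/640})$ bound.

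Part (3) is entirely parallel: decompose $f_2 = \tilde f_2 + (f_2-\tilde f_2)$ with $\tilde f_2(x_1) := \E_b f_2(x_1,b)$, handle $\Lambda(f_0,f_1,\tilde f_2)$ via Proposition \ref{prop: 2 terms}.(1) to extract the main term modulo $O(p^{-1/2})$, and apply Theorem \ref{thm: PET} to the residual to obtain $|\Lambda(f_0,f_1,f_2-\tilde f_2)| \ll \|f_2-\tilde f_2\|_{U^2(0\times\Fp)}^{1/4}$. The analogous degree-lowering --- Proposition \ref{prop: inverse}.(2) extracts an eigenfunction $\chi$ in the second coordinate with $\phi_\chi(x_1)$ non-zero on support, and Part (2) applied to $\Lambda(f_0,f_1,\chi)$ produces $|\Lambda(f_0,f_1,\chi)|=O(p^{-1/640})$ because the main-term factor $\E_b \chi(x_1,b)$ vanishes pointwise --- combined with the energy-decrement iteration produces the final $O(p^{-1/40960})$ error.

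The main obstacle is the quantitative bookkeeping of the energy-decrement iteration. One must simultaneously track the $L^2$-energy of the residual (which upper-bounds the number of iterations), the correlation threshold $\delta^4$ from the inverse theorem (which lower-bounds the energy decrement per step), and the accumulated $O(p^{-1/640})$ error from the repeated application of Part (2) (which must remain comparable to $\delta^{1/4}$ to close the estimate). The exponent loss from $1/640$ in Part (2) to $1/40960=1/(64\cdot 640)$ in Part (3) reflects the composition of the fourth-power cost of the $U^2$-inverse theorem, the sixteenth-power cost of the Gowers-norm bound in Theorem \ref{thm: PET}, and the quadratic cost of Cauchy--Schwarz in the iteration. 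Verifying that the phase-vanishing trick operates cleanly for both directional $U^2$-norms, so that Parts (1) and (2) can be inserted to produce vanishing main terms at each step, is the main technical burden.
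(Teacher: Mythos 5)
Your Part (1) is essentially the paper's argument: expand the eigenfunctions, reduce the $y$-average to a single complete character sum $K(\phi_1(x_2),\phi_2(x_1))$, use Weil/Bombieri square-root cancellation off the diagonal, and identify the diagonal contribution with the main term. This is correct.

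Parts (2) and (3) have a genuine gap, and it is structural, not just a matter of unchecked details. You apply the inverse theorem for $U^2(\Fp\times 0)$ directly to $f_1-\tilde f_1$, obtain an eigenfunction $\chi$ with $\E (f_1-\tilde f_1)\chi\geq\delta^{16}$ and nonzero phase, and then note that $\Lambda(f_0,\chi,f_2)=O(p^{-1/2})$ by Part (1) because the main-term factor $\E_a\chi(a,x_2)$ vanishes. Both observations are correct, but they do not combine. The quantity $\E(f_1-\tilde f_1)\chi$ is an inner product of two functions on $\Fp^2$ with no counting-operator interpretation, and knowing both that this inner product is large and that $\Lambda(f_0,\chi,f_2)$ is small gives no information about the original $\Lambda(f_0,f_1-\tilde f_1,f_2)$. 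The proposed energy-decrement iteration does not repair this: the iterates $f_1-\tilde f_1-\sum c_i\chi_i$ lose $1$-boundedness (the coefficients $c_i$ grow like $\delta^{-8}$ because $\|\chi_i\|_2^2$ can be as small as $\delta^{16}$), which degrades every subsequent application of Proposition \ref{prop: inverse} and of Theorem \ref{thm: PETdual}; the iteration count is at least $\delta^{-32}$; and the error contributions $\sum_i|c_i|O(p^{-1/2})$ do not remain under control. Even optimistically, this does not produce the exponent $1/640$, and there is no closing step because there is no identity linking $\langle f_1-\tilde f_1,\chi\rangle$ back to a counting operator.

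The missing ingredient is the dual function. The paper defines $F_1(x_1,x_2)=\E_y f_0(x_1-P(y),x_2)f_2(x_1-P(y),x_2+Q(y))$, and everything hinges on the identity $\Lambda(f_0,g,f_2)=\E\, g\,F_1$ valid for all $g$. This does three things at once: (a) $\delta\leq\|F_1\|_2$ by Cauchy--Schwarz; (b) $\|F_1\|_2^2=\Lambda(f_0,\overline{F_1},f_2)$, so Theorem \ref{thm: PETdual} plus Theorem \ref{thm: dim} upgrade $\|F_1\|_2\geq\delta$ to $\|F_1\|_{U^2(\Fp\times 0)}\gg\delta^8$; and (c) the inverse theorem applied to $F_1$ (not to $f_1$) produces an eigenfunction $\chi$ with $\E F_1\chi\gg\delta^{32}$, \emph{and this correlation is itself a counting operator}, $\E F_1\chi=\Lambda(f_0,\chi,f_2)$. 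After the pointwise positivity and the pigeonhole set $U$, Part (1) applied to $f_0,\chi 1_U,f_2$ then bounds $\Lambda(f_0,\chi 1_U,f_2)$ and the de-meaning of $f_2$ kills the main term, closing the estimate in a single pass with no iteration. You also de-mean the wrong function: the paper de-means $f_2$, which feeds the crucial fact $\E_b f_2(x_1,b)=0$ into the base case to annihilate the main term; de-meaning $f_1$ instead would serve only if the roles in the counting identity were swapped accordingly, which in Part (2) they are not. Part (3) then runs the mirror argument with $F_2(x_1,x_2)=\E_y f_0(x_1,x_2-Q(y))f_1(x_1+P(y),x_2-Q(y))$, Theorem \ref{thm: PET}, Proposition \ref{prop: inverse}.(2), and Part (2) in place of Part (1), de-meaning $f_1$; your outline of Part (3) inherits exactly the same gap.
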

    
\begin{proof}[Proof of Theorem \ref{thm: degree}.(1)]
    This would be the base case for the induction for Theorem \ref{thm: degree}. Since $f_1,f_2$ are eigenfunctions, there exist subsets $E,F\subset\Fp$, phase functions $\phi:\Fp\longrightarrow\Fp,\alpha:\Fp\longrightarrow\Fp$ and $\psi:\Fp\longrightarrow\R,\beta:\Fp\longrightarrow\R$ such that
    \begin{gather*}
        f_1(x_1,x_2)=1_{E}(x_2)e_p(\phi(x_2)x_1)e(\psi(x_2))\\
        f_2(x_1,x_2)=1_{F}(x_1)e_p(\alpha(x_1)x_2)e(\beta(x_1))
    \end{gather*}
    for all $(x_1,x_2)\in \Fp^2$.

    Expanding the counting operator, we have
    \begin{equation*}
        \begin{split}
            &\E_{x_1,x_2,y} f_0(x_1,x_2) f_1(x_1+P(y),x_2) f_2(x_1,x_2+Q(y))\\
        &=\E_{x_1,x_2} f_0(x_1,x_2)1_{E}(x_2)e_p(\phi(x_2)x_1)e(\psi(x_2))1_{F}(x_1)e_p(\alpha(x_1)x_2)e(\beta(x_1))\E_{y}e_p(\phi(x_2)P(y)+\alpha(x_1)Q(y)) 
        \end{split}
    \end{equation*}

    By Bombieri's bound on single variable exponential sums in finite fields with rational function phases \cite{MR0200267} (also see \cite[Proposition 2.2]{bourgain-chang}), we have
    \begin{equation*}
        \biggl\lvert\E_{y}e_p(\phi(x_2)P(y)+\alpha(x_1)Q(y))\biggr\rvert\ll p^{-1/2}
    \end{equation*}
    when $\phi(x_2)\neq 0$ or $\alpha(x_1)\neq 0$.

    Therefore, the counting operator is equal to
    \begin{equation*}
        \E_{x_1,x_2} f_0(x_1,x_2)1_{E}(x_2)e(\psi(x_2))1_{\phi(x_2)=0}1_{F}(x_1)e(\beta(x_1))1_{\alpha(x_1)=0}+O(p^{-1/2}).
    \end{equation*}

    To complete the proof, note that the directional averages of eigenfunctions have a special form, i.e.,
    \begin{gather*}
        \E_{x_1}f_1(x_1,x_2)=\E_{x_1}1_{E}(x_2)e_p(\phi(x_2)x_1)e(\psi(x_2))=1_{E}(x_2)e(\psi(x_2))1_{\phi(x_2)=0}\\
        \E_{x_2}f_2(x_1,x_2)=\E_{x_2}1_{F}(x_1)e_p(\alpha(x_1)x_2)e(\beta(x_1))=1_{F}(x_1)e(\beta(x_1))1_{\alpha(x_1)=0}.
    \end{gather*}
\end{proof}

\begin{proof}[Proof of Theorem \ref{thm: degree}.(2)]
First, we would like to de-mean $f_2$ with respect to variable $x_2$ by splitting $f_2(x_1,x_2)=(f_2(x_1,x_2)-\E_{a}f_2(x_1,a))+\E_{a}f_2(x_1,a)$. The term $\E_{a}f_2(x_1,a)$ contributes to the main term since by Proposition \ref{prop: 2 terms}.(1), we have
\begin{equation*}
    \begin{split}
        &\E_{x_1,x_2,y} f_0(x_1,x_2) f_1(x_1+P(y),x_2)\E_{a}f_2(x_1,a)\\
        &=\E_{x_1,x_2,y} \Bigl(f_0(x_1,x_2)\E_{a}f_2(x_1,a)\Bigr) f_1(x_1+P(y),x_2)\\
        &=\E_{x_1,x_2} \Bigl(f_0(x_1,x_2)\E_{a}f_2(x_1,a)\Bigr) \E_{b}f_1(b,x_2)+O(p^{-1/2})\\
        &=\E_{x_1,x_2} \Bigl(f_0(x_1,x_2) \E_{b}f_1(b,x_2)\E_{a}f_2(x_1,a)\Bigr)+O(p^{-1/2}).
    \end{split}
\end{equation*}

By replacing $f_2(x_1,x_2)$ by $f_2(x_1,x_2)-\E_{a}f_2(x_1,a)$, we can assume the directional average of $f_2$ with respect to $x_2$ is zero, that is $\E_{x_2}f_2(x_1,x_2)=0$. (Also, note that $f_2(x_1,x_2)-\E_{a}f_2(x_1,a)$ is still an eigenfunction with respect to the second variable.)

Now, given $1$-bounded functions $f_0,f_1,f_2$ where $f_2$ is an eigenfunction with respect to the second coordinate with $\E_{x_2}f_2(x_1,x_2)=0$, let 
\begin{equation*}
    \delta=\biggl\lvert\E_{x_1,x_2,y} f_0(x_1,x_2) f_1(x_1+P(y),x_2)f_2(x_1,x_2+Q(y))\biggr\rvert
\end{equation*}
be the quantity that we would like to upper bound. Define the dual function
\begin{equation*}
    F_1(x_1,x_2)=\E_{y}f_0(x_1-P(y),x_2) f_2(x_1-P(y),x_2+Q(y)).
\end{equation*}

Note that
\begin{equation*}
    \delta=\biggl\lvert\E_{x_1,x_2,y}  f_1(x_1,x_2)F_1(x_1,x_2)\biggr\rvert\leq \biggl(\E_{x_1,x_2}\lvert F_1(x_1,x_2)\rvert^2\biggr)^{1/2}.
\end{equation*}

Expanding $\lvert F_1(x_1,x_2)\rvert^2$, we have
\begin{equation*}
    \E_{x_1,x_2,y} f_0(x_1,x_2) \overline{F_1}(x_1+P(y),x_2)f_2(x_1,x_2+Q(y))\geq \delta^2.
\end{equation*}

By algebraic geometry PET induction (Theorem \ref{thm: PETdual}) and the dimension estimates for the Roth variety (Theorem \ref{thm: dim}), we deduce that
\begin{equation*}
    \lVert F_1 \rVert_{U^2(\Fp\times 0)}^{1/4}\gg \delta^2.
\end{equation*}

By inverse theorem for $U^2(\Fp\times 0)$ (Proposition \ref{prop: inverse}), there exists an eigenfunction with respect to the first coordinate $\chi$ such that
\begin{equation*}
    \E_{x_1,x_2} F_1(x_1,x_2)\chi(x_1,x_2)\gg \delta^{32}
\end{equation*}
and $\E_{x_1} F_1(x_1,x_2)\chi(x_1,x_2)\geq 0$ for all $x_2\in \Fp$.

Let 
\begin{equation*}
    U=\{x_2\in\Fp:\E_{x_1} F_1(x_1,x_2)\chi(x_1,x_2)\gg \delta^{32}/2\},
\end{equation*}
then by the pigeonhole principle, we have $\frac{|U|}{p}\gg \delta^{32}$. Hence, we deduce that
\begin{equation*}
    \E_{x_2} 1_{U}(x_2)\E_{x_1}F_1(x_1,x_2)\chi(x_1,x_2)\gg \delta^{64},
\end{equation*}
which is 
\begin{equation*}
    \E_{x_1,x_2,y} f_0(x_1,x_2) \chi(x_1+P(y),x_2)1_{U}(x_2)f_2(x_1,x_2+Q(y))\gg \delta^{64}.
\end{equation*}

By induction on Theorem \ref{thm: degree}.(1) for the functions $f_0,\chi 1_{U},f_2$, we have
\begin{equation*}
    \E_{x_1,x_2} \Bigl(f_0(x_1,x_2) \E_{a}(\chi 1_{U})(a,x_2) \E_{b}f_2(x_1,b)\Bigr)+O(p^{-1/2})\gg \delta^{64}.
\end{equation*}

Note that the desired estimate in Theorem \ref{thm: degree}.(2) already holds when $p^{-1/2} \gg \delta^{64}$, since we are in the mean zero case. Thus, without loss of generality, we may eliminate the error term $O(p^{-1/2})$ in the above estimate. By Cauchy--Schwarz inequality in variable $x_2$, we have
\begin{equation*}
    \E_{x_2} \biggl\lvert  \E_{a}(\chi 1_{U})(a,x_2) \biggr\rvert^2\gg \delta^{128}.
\end{equation*}

Since $\chi$ is an eigenfunction with respect to the first coordinate, there exists a subset $E\subset \Fp$, phase functions $\phi:\Fp\longrightarrow\Fp,\psi:\Fp\longrightarrow\R$ such that $\chi(x_1,x_2)=1_{E}(x_2)e_p(\phi(x_2)x_1)e(\psi(x_2))$. Therefore, we have
\begin{equation*}
    \E_{x_2} \biggl\lvert  \E_{a}1_{E}(x_2)e_p(\phi(x_2)a)e(\psi(x_2))1_{U}(x_2) \biggr\rvert^2\gg \delta^{128},
\end{equation*}
which is
\begin{equation*}
    \E_{x_2} \biggl\lvert  1_{E}(x_2)1_{U}(x_2)1_{\phi(x_2)=0} \biggr\rvert^2\gg \delta^{128}.
\end{equation*}

By definition of $U$, we deduce that
\begin{equation*}
    \E_{x_2}  1_{E}(x_2)1_{U}(x_2)1_{\phi(x_2)=0}\E_{x_1}F_1(x_1,x_2)\chi(x_1,x_2) \gg \delta^{160},
\end{equation*}
which is 
\begin{equation*}
    \E_{x_2}  1_{E}(x_2)1_{U}(x_2)1_{\phi(x_2)=0}e(\psi(x_2))\E_{x_1}F_1(x_1,x_2) \gg \delta^{160}.
\end{equation*}

Applying Cauchy--Schwarz inequality in $x_2$, we have successfully passed from $U^2(\Fp\times0)$-norm to $U^1(\Fp\times0)$-norm and lowered the degree, i.e.,
\begin{equation*}
    \delta^{320}\ll \E_{x_2}\biggl\lvert\E_{x_1}F_1(x_1,x_2)\biggr\rvert^2=\lVert F_1 \rVert_{U^1(\Fp\times0)}^2
\end{equation*}

Finally, expanding the squares $\lvert\E_{x_1}F_1(x_1,x_2)\rvert^2$, we have
\begin{equation*}
    \E_{x_1,x_2,y} \Bigl(f_0(x_1,x_2)f_2(x_1,x_2+Q(y)) \E_{a}\overline{F_1}(a,x_2)\Bigr)\gg \delta^{320},
\end{equation*}

By the asymptotic formula for two term progressions (Proposition \ref{prop: 2 terms}.(2)) for the functions $f_0(x_1,x_2) \E_{a}\overline{F_1}(a,x_2)$ and $f_2(x_1,x_2)$, we conclude
\begin{equation*}
    \E_{x_1,x_2,y} \Bigl(f_0(x_1,x_2) \E_{a}\overline{F_1}(a,x_2)\E_{b}f_2(x_1,b)\Bigr)+O(p^{-1/2})\gg \delta^{320}
\end{equation*}
The main term is zero since we already eliminated the mean of $f_2$ with respect to variable $x_2$ in the beginning of the argument, hence we have $\delta^{320}\ll p^{-1/2}$ which is equivalent to $\delta\ll p^{-1/640}$.  

\end{proof}

\begin{proof}[Proof of Theorem \ref{thm: degree}.(3) and Theorem \ref{thm: main}]
The proof of Theorem \ref{thm: degree}.(3) is similar to the proof of Theorem \ref{thm: degree}.(2), hence we would only briefly indicate the changes. Basically, we reverse the roles of $x_1$ and $x_2$ and induct on Theorem \ref{thm: degree}.(2). Here is a list of the changes.
\begin{itemize}
    \item We would de-mean $f_1$ with respect to $x_1$.
    \item The dual function would be $F_2(x_1,x_2)=\E_{y}f_0(x_1,x_2-Q(y))f_1(x_1+P(y),x_2-Q(y))$.
    \item We have $\lVert F_2 \rVert_{U^2(0\times \Fp)}$ instead of $\lVert F_1 \rVert_{U^2(\Fp\times 0)}$ since we would invoke Theorem \ref{thm: PET}.
    \item We would invoke Proposition \ref{prop: inverse}.(2) to get an eigenfunction with respect to the second coordinate $\chi$.
    \item The definition of $U$ would be $\{x_1\in\Fp:\E_{x_2} F_2(x_1,x_2)\chi(x_1,x_2)\gg \delta^{32}/2\}$.
    \item We would induct on Theorem \ref{thm: degree}.(2) instead of Theorem \ref{thm: degree}.(1) for the averages
    \begin{equation*}
        \E_{x_1,x_2,y} f_0(x_1,x_2) f_1(x_1+P(y),x_2)\chi(x_1,x_2+Q(y))1_{U}(x_1)\gg \delta^{64},
    \end{equation*}
    hence we would get an error term $O(p^{-1/640})$ instead of $O(p^{-1/2})$, this would lead to the bound $\delta\ll p^{-1/(640\cdot64)}=p^{-1/(40960)}$.
\end{itemize}
\end{proof}

\printbibliography 

\end{document}